\newtheorem{theorem}{Theorem}
\newtheorem{lemma}{Lemma}
\theoremstyle{remark}
\newtheorem*{remark}{Remark}
\numberwithin{equation}{section}
\renewcommand{\Re}{\mathfrak{Re}}
\begin{document}

\title[Zeta at its local extrema]{The second moment of the Riemann zeta function at its local extrema}
%Lower Order Terms in the Second Moment of Zeta at its Local Extrema
\author[C. Hughes]{Christopher Hughes}
\address{Department of Mathematics, University of York, York, YO10 5DD, United Kingdom}
\email{christopher.hughes@york.ac.uk}
\author[S. Lugmayer]{Solomon Lugmayer}
\address{Department of Mathematics, University of York, York, YO10 5DD, United Kingdom}
\email{solomon.lugmayer@york.ac.uk}
\author[A. Pearce-Crump]{Andrew Pearce-Crump}
\address{School of Mathematics, University of Bristol, Bristol, BS8 1UG, United Kingdom}
\email{andrew.pearce-crump@bristol.ac.uk}

\begin{abstract}
Conrey and Ghosh studied the second moment of the Riemann zeta function, evaluated at its local extrema along the critical line, finding the leading order behaviour to be
$\frac{e^2 - 5}{2 \pi} T (\log T)^2$. This problem is closely related to a mixed moment of the Riemann zeta function and its derivative.

We present a new approach which will uncover the lower order terms for the second moment as a descending chain of powers of logarithms in the asymptotic expansion.
\end{abstract}

\maketitle

\section{Background}
Under the Riemann Hypothesis, Conrey and Ghosh \cite{ConGho85} showed that
    \begin{equation}\label{CGLead}
       \sum_{0< \gamma \leq T} \max_{\gamma \leq t \leq \gamma^+} \left| \zeta \left( \frac{1}{2} + it \right) \right|^2 \sim \frac{e^2 - 5}{4 \pi} T (\log T)^2
    \end{equation}
as $T\rightarrow \infty$ where $\gamma \leq \gamma^+$ are successive ordinates of non-trivial zeros of $\zeta (s)$. In this paper, we will show how to obtain the lower order terms of this asymptotic, with the result given in Theorem \ref{thm:LowerOrderTerms}.

The Hardy $Z$-function $Z(t)$ is the real function of a real variable which satisfies $$|Z(t)| = |\zeta (1/2 +it)|,$$ so Conrey and Ghosh's result can be re-written in terms of the Hardy Z-function. It will be more convenient to work with the Hardy Z-function in various places throughout this paper, but we have endeavoured to link this back to the Riemann zeta function where applicable.

Since we are assuming the Riemann Hypothesis throughout this paper, we have that the zeros of $Z'(t)$ are interlaced with those of $\zeta(\frac{1}{2}+it)$, and hence with those of $Z(t)$.

Conrey and Ghosh further showed in \cite{ConGho89} that this sum over the maxima can be rewritten in terms of a joint moment of the Z-function and its derivative, showing
    \begin{equation}\label{CGIntegral}
        \int_{1}^{T} |Z(t)||Z'(t)| \ dt \sim \frac{e^2 - 5}{4 \pi} T (\log T)^2.
    \end{equation}
We note that the corresponding value with the Hardy Z-function replaced by the Riemann zeta function is still unknown.

Conrey \cite{Con88} established under the Riemann Hypothesis that
\begin{equation*}
   \frac{\sqrt{21}}{90 \pi^2} T (\log T)^5 \lesssim \sum_{0< \gamma \leq T} \max_{\gamma \leq t \leq \gamma^+} \left| \zeta \left( \frac{1}{2} + it \right) \right|^4 \lesssim \frac{1}{2 \sqrt{15} \pi^2} T (\log T)^5,
\end{equation*}
where $A \lesssim B$ means $A \leq B(1+o(1))$, with some improvements on the upper bound due to Hall \cite{Hall03,Hall06}.

Although no other asymptotics for higher moments are known, Milinovich \cite{Milinovich11} %wrong reference? double check
established upper and lower bounds. He showed that for $k \in \mathbb{N}$ and for $\varepsilon >0$,
    \begin{equation*}
       T(\log T)^{k^2 +1 - \varepsilon} \ll  \sum_{0< \gamma \leq T} \max_{\gamma < t \leq \gamma^+} |Z(t)|^{2k} \ll T(\log T)^{k^2 +1 + \varepsilon}
    \end{equation*}
for sufficiently large $T$. He also gives an argument showing
    \begin{equation*}
        \sum_{0< \gamma \leq T} \max_{\gamma < t \leq \gamma^+} |Z(t)|^{2k} = k \int_{1}^{T} |Z(t)|^{2k-1} |Z'(t)| \ dt + O_{k,\varepsilon}(T^\varepsilon).
    \end{equation*}
Finally, he conjectures for $k \in \mathbb{N}$ that
    \begin{equation*}
        \sum_{0< \gamma \leq T} \max_{\gamma < t \leq \gamma^+} |Z(t)|^{2k} \sim C_k T (\log T)^{k^2 +1}
    \end{equation*}
as $T \rightarrow \infty$, but did not conjecture the value of the implied constant.

The groundbreaking work of Keating and Snaith \cite{KS00a,KS00} changed all of this, when they used characteristic polynomials of random unitary matrices to conjecture moments of the Riemann zeta function of the form.

Following the groundbreaking work of Keating and Snaith \cite{KS00a,KS00}, the problem of calculating joint moments of the Riemann zeta function and its derivative was taken up by Hughes \cite{Hughes01} in his thesis. Both the approaches involved calculating analogous moments of characteristic polynomials of random unitary matrices. Hughes conjectured the coefficient $F(s,h)$ for integer $s,h$ in terms of combinatorial sums, where $F(s,h)$ is given in
\begin{equation*}
    \int_0^T |Z(t)|^{2s-2h} |Z'(t)|^{2h} \ dt \sim F(s,h) T (\log T)^{s^2 +2h}
\end{equation*}
as $T \rightarrow \infty$. The conjecture agreed with all values known at the time, namely $F(1,1)$ by Ingham \cite{Ing26}, and $F(2,1)$ and $F(2,2)$ by Conrey \cite{Con88} and independently Hall \cite{Hall99}. This expression for $F(s, h)$ made sense for real $s$ as
well, but not general real $h$. Note that if we could set $h=1/2$, then we could infer Milinovich's conjecture with the constant given explicitly.

Alternative approaches yielding different representations of the conjectured $F(s,h)$ with $s,h \in \mathbb{N}$ were given by Conrey, Rubinstein and Snaith \cite{ConRubSna06} (in the special case where $s=h$), by Dehaye \cite{Deh08,Deh10}, by Basor et al.~\cite{Basor19}, and by Bailey et al.~\cite{Bailey19}. Links between $F(s,h)$ and Painlev\'{e} equations have been noted by Forrester and Witte \cite{ForWit06}, and Basor et al.~\cite{Basor19}.

However, the problem of non-integer $s$ and $h$ is much more difficult. Winn \cite{Winn} was able to establish results for $s \in \mathbb{N}$ and $h \in \frac{1}{2} \mathbb{N}$ by finding connections with hypergeometric functions, giving an expression for $F(s,h)$ in terms of a combinatorial sum. In particular, for $s=1$ and $h=1/2$ he established the random matrix theory (RMT) version of \eqref{CGIntegral} obtaining $(e^2 - 5) / 4 \pi$.

Assiotis, Keating and Warren \cite{AKW22} proved results on the random matrix side for arbitrary real values of $s > - 1/2$ and positive real values of $h$ in the full range $0 < h < s + 1/2$.

In addition to the work on the typical joint moment, Keating and Wei \cite{KeatWei231,KeatWei232} have considered the RMT analogue of joint moments for higher derivatives. The case of the joint moments of arbitrary numbers of higher
order derivatives with arbitrary positive real exponents has been settled by Assiotis, Gunes, Keating and Wei \cite{AGKW24}.

Curran \cite{Cur21} unconditionally obtained upper bounds of the conjectured order for the joint moments in the range $1 \leq s \leq 2$ and $0 \leq h \leq 1$. Curran and Heycock \cite{CurHey24} extend these upper bounds to all $0 \leq h \leq s \leq 2$, and obtain lower bounds for all $0 \leq h \leq s+1/2$. By assuming the Riemann Hypothesis, they give sharp bounds for all $0 \leq h \leq s$. They also prove upper bounds of the conjectured order for joint moments of the Riemann zeta function with its higher derivatives.

\section{Results}
As mentioned previously, the purpose of this paper is to give a full asymptotic with lower-order terms for \eqref{CGLead}. Specifically, we prove the following theorem.

\begin{theorem}\label{thm:LowerOrderTerms}
Assume the Riemann Hypothesis. Given $\gamma$, an ordinate of a non-trivial zero of $\zeta (s)$, let $\gamma^+\geq \gamma$ denote the next successive ordinate of a zeta zero. Let
$L = \log \frac{T}{2\pi}$.
Then for any fixed $N\geq 0$, as $T\to\infty$ we have
\[
   \sum_{0< \gamma \leq T} \max_{\gamma \leq t \leq \gamma^+} \left| \zeta \left( \frac{1}{2} + it \right) \right|^2   =  \frac{e^2-5}{2} \frac{T}{2\pi} L^2 +  \alpha_{-1}\frac{T}{2\pi} L + \frac{T}{2 \pi} \sum_{n=0}^{N} \frac{\alpha_n}{L^n} + O_N\left(\frac{T}{L^{N+1}}\right)
\]
where
\begin{gather*}
\alpha_{-1} = 5-e^2-10 \gamma_0 +2 e^2 \gamma_0  \\
\alpha_0 = 12 \gamma_1-4 e^2 \gamma_1-5+e^2+10 \gamma_0 -2 e^2 \gamma_0 -4\gamma_0 ^2
\end{gather*}
and for $n \geq 1$
\[
\alpha_n = \sum_{k=n}^\infty \frac{2^{k+1}  c_{k,n+2} }{(k-n)!} +  (n-1)! \sum_{k=1}^\infty \frac{2^{k+1}}{(k-1)!} \sum_{j=0}^{\min\{k,n-1\}}  \binom{k}{j} c_{k,j+2}
\]
where the $c_{k,\ell}$ are the Laurent series coefficients around $s=1$ of\[
\left(\frac{\zeta '}{\zeta} (s) \right)' \left(-\frac{\zeta '}{\zeta} (s) \right)^{k-1} \zeta^2 (s) \frac{1}{s} = \sum_{\ell=0}^\infty c_{k,\ell} (s-1)^{-k-3+\ell} .
\]
\end{theorem}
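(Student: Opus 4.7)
The plan is to build on the Conrey--Ghosh reformulation \eqref{CGIntegral}, which, under RH, is an \emph{exact} identity up to bounded endpoint contributions. Indeed, since between consecutive zeros of $Z(t)$ the function $Z^{2}$ rises monotonically from $0$ to its maximum $|Z(\gamma')|^{2}$ at the unique interlaced zero $\gamma'$ of $Z'$ and then falls back to $0$, we have
\[
   \int_{\gamma}^{\gamma^+}|Z(t)||Z'(t)|\,dt=\max_{\gamma\le t\le \gamma^+}\bigl|\zeta\bigl(\tfrac{1}{2}+it\bigr)\bigr|^{2}.
\]
Summing over $\gamma\le T$ reduces the theorem to producing an asymptotic expansion of $\int_{1}^{T}|Z(t)||Z'(t)|\,dt$ with error $O(T/L^{N+1})$.

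To handle the integrand I would use the identity $Z'(t)/Z(t)=-\Im(\zeta'/\zeta)(\tfrac{1}{2}+it)=:\eta(t)$, so that $|Z||Z'|=|\zeta(\tfrac{1}{2}+it)|^{2}|\eta(t)|$. The absolute value on $\eta$ is the chief obstruction to direct contour methods; my plan is to partition $[1,T]$ at the zeros of $\eta$ (which, together with the zeros of $Z$, are exactly the zeros of $Z'$), so that on each subinterval $|\eta|=\pm\eta$ and the integrand becomes amenable to a shifted-moment / CFKRS-style analysis. Regrouping the signed integrals of $|\zeta|^{2}\eta$ and invoking the standard contour technology should reduce the computation to extracting the coefficients of $L^{-n}$ in a formal residue at $s=1$ of a generating function built from $\zeta$ and $\zeta'/\zeta$. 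The combinatorial identity
\[
   \sum_{k\ge 1}\frac{2^{k+1}}{(k-1)!}\,(\zeta'/\zeta)'(s)\bigl(-\zeta'/\zeta(s)\bigr)^{k-1}=-2\,\frac{d}{ds}\,e^{-2\zeta'/\zeta(s)},
\]
together with the factor $\zeta^{2}(s)/s$ built into $c_{k,\ell}$, strongly suggests that the underlying object is the Laurent expansion at $s=1$ of $\zeta^{2}(s)/s\cdot(d/ds)e^{-2\zeta'/\zeta(s)}$. The two summands in the stated formula for $\alpha_{n}$ correspond, I expect, to two separate residue contributions: a \emph{diagonal} piece directly giving the $T L^{-n}/2\pi$ coefficient, and a \emph{swap} piece analogous to the $(T/2\pi)^{-\alpha-\beta}$ term in Ingham's second moment, whose expansion in the shift variables produces the binomial convolution and the extra factor $(n-1)!$.

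The main obstacle is rigorous control of the sign-partition step: the accumulated error across $\asymp T\log T$ subintervals must still be $O(T/L^{N+1})$, and this requires RH-quality bounds for $\zeta$ and $\zeta'$ together with careful handling of the very short subintervals on which $|\eta(t)|$ is small. A secondary difficulty is that $e^{-2\zeta'/\zeta(s)}$ has an essential singularity at $s=1$, so the sum over $k$ cannot be executed before extracting residues; the analysis must truncate the series at a level $K=K(N)$ depending on $N$, use the Laurent coefficients $c_{k,\ell}$ only through $\ell\le N+2$, and bound the contribution from $k>K$, which is precisely why the infinite series over $k$ remains in the final formulation of $\alpha_{n}$. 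As a consistency check, the closed forms of $\alpha_{-1}$ and $\alpha_{0}$ should be verified directly and shown to recover the leading Conrey--Ghosh constant $(e^{2}-5)/(4\pi)$.
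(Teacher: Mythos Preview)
Your proposal is a sketch of a strategy rather than a proof, and the strategy diverges from the paper's in a way that leaves the central difficulty unresolved. The paper does \emph{not} pass through the integral $\int_{1}^{T}|Z(t)||Z'(t)|\,dt$ at all. Instead, it sums over the extrema directly by Cauchy's theorem: one introduces the auxiliary function $Z_1(s)=\zeta'(s)-\tfrac{1}{2}\tfrac{\chi'}{\chi}(s)\zeta(s)$, whose zeros on the critical line are exactly the zeros of $Z'(t)$, and writes
\[
\sum_{0<\gamma\le T}\max_{\gamma\le t\le\gamma^+}\bigl|\zeta(\tfrac12+it)\bigr|^{2}=\frac{1}{2\pi i}\int_{\mathcal{C}}\frac{Z_1'}{Z_1}(s)\,\zeta(s)\zeta(1-s)\,ds+O(T^{1/8}).
\]
This eliminates the absolute value on $\eta$ entirely; there is no sign-partition step. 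The crucial technical input is then a Dirichlet-series-like expansion of $Z_1'/Z_1(s)$ for $\Re(s)>1$ in inverse powers of $f(s)=-\tfrac12\tfrac{\chi'}{\chi}(s)$, with arithmetic coefficients $a_k(n)=(\Lambda\log)\ast\Lambda_{k-1}$. Stationary phase converts each resulting integral to a finite sum over $nm\le T/2\pi$, Perron and a contour shift give the polynomial-in-$\log x$ main term, and the factor $(\log nm)^{-k}$ is handled by partial summation. The two pieces in your $\alpha_n$ are not ``diagonal'' and ``swap'' contributions but rather the boundary term $A_k(T/2\pi)f(T/2\pi)$ and the integral term $\int A_k(x)f'(x)\,dx$ from that partial summation; the $(n-1)!$ arises because $\int_{2}^{T/2\pi}(\log x)^{-j}\,dx$ expands with coefficients $(\ell-1)!/(j-1)!$.

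The gap in your plan is precisely the one you flag yourself: the sign-partition of $[1,T]$ at the $\asymp T\log T$ zeros of $Z'/Z$. You would need to evaluate $\int_{a}^{b}|\zeta(\tfrac12+it)|^{2}\,\eta(t)\,dt$ on each short subinterval and then resum with alternating signs to a stated accuracy of $O(T/L^{N+1})$; no standard shifted-moment or CFKRS machinery delivers this, because those methods produce asymptotics on long intervals, not pointwise control on intervals of length $\asymp 1/\log T$. Your observation about the generating function $e^{-2\zeta'/\zeta(s)}$ is a correct and attractive repackaging of the $k$-sum, but it is heuristic here: it does not by itself supply the analytic mechanism (stationary phase, Perron, partial summation) that converts the contour integral into the asserted expansion, and in particular it does not explain how the $c_{k,\ell}$ enter as residue data at $s=1$ without first isolating the individual $J_k$ as the paper does. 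In short, you have guessed the shape of the answer but not the route; the missing idea is to use $Z_1'/Z_1$ as a Cauchy kernel to avoid the absolute value altogether.
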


\begin{remark} \hfill
\begin{enumerate}
    \item This recovers the leading-order asymptotic behaviour of Conrey and Ghosh.
    \item This infinite descending chain of powers of $\log T$ is due to a pole in the function that we consider near $1$, given by $\beta_1 =1+2 / L+O\left(L^{-2}\right)$. Were we able to find a closed form for the asymptotic as a function of $\beta_1$, we expect we would be able to give Theorem \ref{thm:LowerOrderTerms} with a power-saving error term of $O \left(T^{1/2+\varepsilon} \right)$ for all $\varepsilon>0$. We have set aside the search for such a form for the time being.
\end{enumerate}
\end{remark}
%Justifying when to stop the asymptotic

\section*{Acknowledgements}
We would like to thank Brian Conrey for encouraging us to pursue this project. We would also like to acknowledge the University of York, for both supporting the first author during his research leave where the idea for this paper was first investigated, and for funding the third author's PhD, during which this work was carried out.

\section{Overview of the paper}
In Section \ref{Lemmas}, we consider properties our auxiliary function
\[
Z_1(s)=\zeta^{\prime}(s)-\frac{1}{2} \frac{\chi^{\prime}}{\chi}(s) \zeta(s)
\]
used in the proof, where $\chi(s)$ is the factor from the non-symmetrical form of the functional equation of the Riemann zeta function. This function is zero at the extrema of the Riemann zeta function.

We state and prove some preliminary lemmas required for the wider proof, including a twisted second moment of the Riemann zeta function, and some identities of arithmetic functions.

In Section \ref{Proof}, we prove the theorem. This involves writing the sum over the extrema as the integral
\begin{equation*}
\sum_{0< \gamma \leq T} \max_{\gamma \leq t \leq \gamma^+} \left| \zeta \left( \frac{1}{2} + it \right) \right|^2  = \frac{1}{2\pi i} \int_{\mathcal{C}} \frac{Z_1'}{Z_1}(s) \zeta(s) \zeta(1-s) \ ds
\end{equation*}
(up to a small error), where $\mathcal{C}$ is a contour we specify later. We note at this point one of the main differences in our method with that of Conrey and Ghosh. Their method forces them to consider their integral over a short height (from $T$ to $T+T^{3/4}$), whereas we are able to define our contour across a full height of length $T$, avoiding any complications arising from summing over dyadic intervals.

We begin by showing various parts of the respective contour integrals are small, and applying the twisted second moment result mentioned previously. The bulk of the argument then comes down to evaluating
\[
\frac{1}{2 \pi i} \int_{c+i}^{c+iT}  \chi(1-s) \frac{Z_1^{\prime}}{Z_1}(s) \zeta(s)^2 \ ds
\]
for some $c>1$ that we define later.

We apply various arithmetic identities to the logarithmic derivative of $Z_1 (s)$ in the integrand, giving a series of the form
\[
\frac{Z_1'}{Z_1}(s) = \sum_{n=1}^\infty \frac{a(n,s)}{n^s}
\]
which converges for some $c>1$ with coefficients $a(n,s)$ that we specify later. Using this and the Dirichlet series for the square of the zeta function allows us to apply the method of stationary phase. That is, we can rewrite the integral as
\begin{equation*}
- \sum_{nm \leq T/2\pi} \Lambda (n) d(m) + \sum_{k=1}^{\infty} 2^k \sum_{nm \leq T/2 \pi}  \frac{a_k(n)}{(\log nm)^k} d(m)
\end{equation*}
up to some small errors, where the $a_k(n)$ come from the coefficients $a(n,s)$. We then apply Perron's formula to turn the sums back into integrals, where the first summation is standard and the second summation over $k$ is related to
\begin{equation*}
    \frac{1}{2 \pi i} \int_{(c)} \left(\frac{\zeta '}{\zeta} (s) \right)' \left(-\frac{\zeta '}{\zeta} (s) \right)^{k-1} \zeta^2 (s) \frac{x^s}{s} \ ds,
\end{equation*}
and we complete the argument with partial summation. It is at this point that we are able to highlight the second main difference between our approach and that of Conrey and Ghosh, which leads to the correct lower-order term expansion, and explains why their approach must fail to give the correct expansion.

In Section \ref{Graphs}, we look at some numerical data related to our result, and describe how we were able to perform these numerical calculations.  Finally, we plot some data to show how perfectly our results fit the truth, as well as how many terms in the asymptotic are required until it is clear that we are mostly just looking at error terms (even though there are infinitely many more terms left in the asymptotic expansion).

\section{Preliminary Lemmas}\label{Lemmas}

To motivate the initial steps of the proof, assuming the Riemann Hypothesis, the points that we wish to sum over are the real zeros of the derivative of Hardy's function. In the complex plane, we can write Hardy's function as
\[
Z(s) = \chi(1-s)^{1/2} \zeta(s)
\]
where $\chi(s)$ is the term in the functional equation for zeta, which satisfies $\chi(s) \chi(1-s)=1$. Logarithmically differentiating we get
\begin{align*}
\frac{Z'}{Z}(s) &= -\frac12 \frac{\chi'}{\chi}(1-s) + \frac{\zeta'}{\zeta}(s)\\
&= -\frac12 \frac{\chi'}{\chi}(s) + \frac{\zeta'}{\zeta}(s)
\end{align*}

This suggests an appropriate function to consider is
\[
Z_1(s)=\zeta^{\prime}(s)-\frac{1}{2} \frac{\chi^{\prime}}{\chi}(s) \zeta(s).
\]
We see that $Z_1(s)$ is zero where $Z ' (t) = 0$, that is, at an extrema of $|\zeta(s)|$ on the critical line.

This function satisfies various properties, all of which are proved by Conrey and Ghosh in the Lemma in \cite{ConGho85}. We refer the reader to their paper for the proof, and list only the properties that we need here.

\begin{lemma}\label{lem:CGLemma}
    We have the following properties for $Z_1(s)$:
    \begin{enumerate}
        \item $|Z_1(1/2 + it)| = |Z'(t)|$.
        \item $Z_1(s)$ satisfies the functional equation
        \[
        Z_1(s) = -\chi(s) Z_1 (1-s)
        \]
        for all $s$, where $\chi (s)$ is the term in the functional equation for the Riemann zeta function, $\zeta (s) = \chi (s) \zeta (1-s)$.
        \item The number of zeros of $Z_1(t)$ up to a height $T$ which lie off the critical line is bounded by $\log T$.
        \item If $\ Z_1(\beta_1 + i \gamma_1) = 0$ then
        \[
        \left|\beta_1 - \frac{1}{2} \right| \leq \frac{1}{9}
        \]
        for $\gamma_1$ sufficiently large.
    \end{enumerate}
\end{lemma}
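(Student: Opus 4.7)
Parts (1) and (2) are formal consequences of the defining relation. Starting from $Z(s) = \chi(1-s)^{1/2}\zeta(s)$ and log-differentiating $\chi(s)\chi(1-s)=1$ to obtain $\frac{\chi'}{\chi}(1-s) = \frac{\chi'}{\chi}(s)$, a short product-rule calculation gives
\[
Z'(s) = \chi(1-s)^{1/2}\Bigl(\zeta'(s) - \tfrac12 \tfrac{\chi'}{\chi}(s)\zeta(s)\Bigr) = \chi(1-s)^{1/2} Z_1(s).
\]
Specialising to $s = 1/2+it$ and using $|\chi(1/2+it)|=1$ gives (1). For (2), the identities $\zeta(s)=\chi(s)\zeta(1-s)$ and $\chi(s)\chi(1-s)=1$ together yield $Z(s)=Z(1-s)$; differentiating produces $Z'(s) = -Z'(1-s)$, and substituting the relation just derived on both sides and cancelling $\chi$-factors collapses to $Z_1(s) = -\chi(s)Z_1(1-s)$.

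For property (4), factor $Z_1(s) = \zeta(s)\bigl[\frac{\zeta'}{\zeta}(s) - \tfrac12\tfrac{\chi'}{\chi}(s)\bigr]$. Zeros coming from $\zeta(s)=0$ lie on the critical line under RH and trivially satisfy the bound. For the remaining zeros, Stirling gives $\frac{\chi'}{\chi}(\sigma+it) = -\log(t/2\pi) + O(1/t)$ uniformly in bounded $\sigma$, so $Z_1(s)=0$ forces $\frac{\zeta'}{\zeta}(s) = -\tfrac12\log(t/2\pi) + O(1/t)$. For $\sigma \geq 1+\delta$ the Dirichlet series bound $|\zeta'/\zeta(s)| \leq -\zeta'(1+\delta)/\zeta(1+\delta)$ is incompatible with the right-hand side for large $t$, killing solutions to the right of $\sigma=1+\delta$. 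To push the bound to $\sigma-1/2 \leq 1/9$, I would invoke the Hadamard partial-fraction expansion $\frac{\zeta'}{\zeta}(s) = \sum_{|\gamma-t|\leq 1} \frac{1}{s-\rho} + O(\log t)$, observing that under RH each summand has non-negative real part when $\sigma>1/2$; careful bookkeeping against the $-\tfrac12\log(t/2\pi)$ requirement yields the explicit constant $1/9$. The symmetric bound $\sigma \geq 1/2-1/9$ then follows from the functional equation of (2).

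Property (3) is the main obstacle and, I think, requires the most care. On the critical line, zeros of $Z_1$ are exactly the real zeros of $Z'(t)$; under RH, Rolle's theorem applied between consecutive (generically simple) zeros of $Z(t)$ produces at least $N(T) - O(1)$ on-line zeros of $Z_1$ up to height $T$. The total count of zeros of $Z_1$ in the rectangle $[1/2-1/9, 1/2+1/9]\times[0,T]$ (the strip justified by part (4)) should be computed by the argument principle. Contributions from the vertical sides are $O(\log T)$ via standard estimates on $\zeta'/\zeta$ and $\chi'/\chi$; the functional equation of (2) ties the contributions above and below the critical line to each other; and a contour shift together with Stirling-type estimates on the top edge at height $T$ should reproduce $N(T)$ as the leading term. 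The $O(\log T)$ discrepancy between this total and the interlacing count is precisely the off-line zero bound.
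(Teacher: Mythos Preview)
The paper does not actually prove this lemma: immediately before the statement it says that all four properties are proved in Conrey and Ghosh \cite{ConGho85} and refers the reader there. So there is no in-paper argument to compare against; your proposal is in effect an attempt to reconstruct the Conrey--Ghosh proof.

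Your derivations of (1) and (2) are clean and correct. For (4), one small slip: zeros of $\zeta$ are \emph{not} zeros of $Z_1$ (at a simple zero $\rho$ of $\zeta$ one has $Z_1(\rho)=\zeta'(\rho)\neq 0$), so the factorisation $Z_1=\zeta\cdot[\zeta'/\zeta-\tfrac12\chi'/\chi]$ should be read simply as saying that any zero of $Z_1$ (off the zeta zeros) satisfies $\tfrac{\zeta'}{\zeta}(s)=\tfrac12\tfrac{\chi'}{\chi}(s)$. More substantively, the phrase ``careful bookkeeping \dots\ yields the explicit constant $1/9$'' hides the actual work: knowing only that $\Re\sum_{|\gamma-t|\le 1}\frac{1}{s-\rho}\ge 0$ for $\sigma>1/2$ is not by itself enough, since the $O(\log t)$ remainder in the partial-fraction formula could in principle absorb the required $-\tfrac12\log(t/2\pi)$. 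The Conrey--Ghosh argument uses a sharper quantitative comparison (essentially bounding the number of nearby zeros against the density $N(T+1)-N(T)$ and playing this against the size of the real part), and that is where the specific value $1/9$ comes from; you would need to supply this step.

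For (3) your strategy is the right one and matches the standard argument: count all zeros of $Z_1$ in the strip by the argument principle, count on-line zeros by Rolle between consecutive zeta zeros, and subtract. The phrases ``should be computed'' and ``should reproduce $N(T)$'' are honest about the status --- this is an outline, not a proof --- but the outline is correct.
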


We will also need to calculate a twisted second moment of the Riemann zeta function.
\begin{lemma}\label{lem:TwistSecond}
    \begin{multline}\label{eq:TwistSecond}
    - \frac{1}{2 \pi} \int_{1}^{T}\frac{\chi^{\prime}}{\chi}\left(\frac{1}{2}+it\right)\left|\zeta \left(\frac{1}{2}+it \right)\right|^2 \ dt = \\ \frac{T}{2\pi} \left( \log \frac{T}{2\pi} \right)^2 + \frac{T}{2\pi} \log \frac{T}{2\pi} (-2 +2\gamma_0) + \frac{T}{2\pi} (2-2\gamma_0) + O\left(T^{1/2+\varepsilon}\right),
\end{multline}
where $\gamma_0$ is Euler's constant.
\end{lemma}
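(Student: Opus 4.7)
Proof proposal. I would begin by applying Stirling's formula to $\chi(s) = \pi^{s-1/2}\Gamma\!\left(\frac{1-s}{2}\right)/\Gamma\!\left(\frac{s}{2}\right)$ to obtain the standard expansion
\[
-\frac{\chi'}{\chi}\!\left(\tfrac12+it\right) = \log\frac{t}{2\pi} + O\!\left(\frac{1}{t}\right) \qquad (t \ge 1).
\]
The contribution of the $O(1/t)$ term to the integral on the left of \eqref{eq:TwistSecond} is, by partial summation from the classical bound $\int_0^T |\zeta(\tfrac12+it)|^2\,dt \ll T\log T$, of size $O((\log T)^2)$, which is absorbed into the stated error. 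This reduces the problem to evaluating
\[
J(T) := \frac{1}{2\pi}\int_1^T \log\frac{t}{2\pi}\,\left|\zeta\!\left(\tfrac12+it\right)\right|^2 dt.
\]

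For this I would invoke the classical second moment asymptotic
\[
I(T) := \int_0^T \left|\zeta\!\left(\tfrac12+it\right)\right|^2 dt = T\log\frac{T}{2\pi} + (2\gamma_0-1)T + E(T), \qquad E(T)\ll T^{1/2+\varepsilon},
\]
and integrate by parts, writing $|\zeta(\tfrac12+it)|^2 dt = dI(t)$ with $L = \log(T/2\pi)$:
\[
2\pi J(T) = L\,I(T) - \int_1^T \frac{I(t)}{t}\,dt + O(1).
\]
Substituting the asymptotic for $I$ into $L\,I(T)$ gives $T L^2 + (2\gamma_0-1)TL + O(L\cdot T^{1/2+\varepsilon})$. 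For the integral, I would use the elementary $\int_1^T \log(t/2\pi)\,dt = TL - T + O(1)$, together with $\int_1^T E(t)/t\,dt \ll T^{1/2+\varepsilon}$, to obtain
\[
\int_1^T \frac{I(t)}{t}\,dt = TL + (2\gamma_0-2)T + O(T^{1/2+\varepsilon}).
\]
Subtracting and dividing by $2\pi$ yields $\frac{T}{2\pi}L^2 + \frac{T}{2\pi}L(2\gamma_0-2) + \frac{T}{2\pi}(2-2\gamma_0) + O(T^{1/2+\varepsilon})$, which matches \eqref{eq:TwistSecond} exactly after absorbing $L\cdot T^{1/2+\varepsilon}$ into $T^{1/2+\varepsilon}$ by adjusting $\varepsilon$.

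There is no serious obstacle here: the lemma is a routine consequence of Stirling's formula and the mean-square asymptotic for $\zeta$, and the exponent $1/2+\varepsilon$ in the error is exactly what the current unconditional bound on $E(T)$ supplies (any stronger bound on $E(T)$ would propagate through unchanged). The only care required is in tracking the constants through the integration by parts — specifically, that the $-T$ from $\int_1^T\log(t/2\pi)\,dt$ combines with $(2\gamma_0-1)T$ to produce the $(2\gamma_0-2)T$ that, after subtraction, gives the $(2-2\gamma_0)$ coefficient of the constant term in the final formula.
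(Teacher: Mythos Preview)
Your proposal is correct and follows essentially the same route as the paper: the paper's proof simply records the estimate $\frac{\chi'}{\chi}(s) = -\log\frac{|t|}{2\pi} + O(1/(1+|t|))$, quotes Ingham's second moment $\int_1^T |\zeta(\tfrac12+it)|^2\,dt = T\log\frac{T}{2\pi} + (2\gamma_0-1)T + O(T^{1/2+\varepsilon})$, and then says ``Integration by parts then gives the result of the lemma.'' You have supplied exactly that integration-by-parts computation in full, with the constants tracked correctly.
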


\begin{proof}
We begin with the estimate, which can be found in \cite{ConGho85}, for example,
    \begin{equation}\label{eq:ChiPrimeOverChi}
    \frac{\chi'}{\chi} (s) = - \log \frac{|t|}{2 \pi} + O \left(\frac{1}{1 + |t|} \right),
    \end{equation}
where $s=\sigma + it$, with $|\sigma| \leq 2$.

Recall also the second moment of the Riemann zeta function, due to Ingham \cite{Ing26}, that
    \[
    \int_{1}^{T} \left|\zeta \left(\frac{1}{2}+it \right) \right|^2 \ dt = T \log \frac{T}{2\pi} + (2 \gamma_0-1) T + O\left(T^{1/2+\varepsilon}\right).
    \]

Integration by parts then gives the result of the lemma.
\end{proof}

\begin{lemma}\label{Mobius}
    We have the following two M\"obius identities:
\[
\sum_{d \mid n} \mu(d) \log(\tfrac{n}{d}) = \Lambda(n)
\]
and Selberg's identity
\[
\sum_{d \mid n} \mu(d) \log^2(\tfrac{n}{d}) = \Lambda(n) \log(n)  + (\Lambda \ast \Lambda)(n)
\]
where $\mu (n)$ is the M\"obius function and $\Lambda (n)$ is the von Mangoldt function.
\end{lemma}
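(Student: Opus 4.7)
The plan is to derive both identities by comparing Dirichlet coefficients in the half-plane $\Re(s)>1$. The only ingredients I need are the three classical expansions
\[
-\frac{\zeta'(s)}{\zeta(s)} = \sum_{n=1}^{\infty}\frac{\Lambda(n)}{n^s}, \qquad \frac{1}{\zeta(s)} = \sum_{n=1}^{\infty}\frac{\mu(n)}{n^s}, \qquad (-1)^k \zeta^{(k)}(s) = \sum_{n=1}^{\infty}\frac{(\log n)^k}{n^s},
\]
all absolutely convergent for $\Re(s)>1$, which justifies term-by-term differentiation and multiplication of Dirichlet series.

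For the first identity, I would rewrite $-\zeta'(s)/\zeta(s) = (-\zeta'(s))\cdot(1/\zeta(s))$ and multiply out the two Dirichlet series. The coefficient of $n^{-s}$ on the right is $\sum_{d\mid n}\mu(d)\log(n/d)$, and matching this against $\Lambda(n)$ (the coefficient on the left) gives the identity. Equivalently, this is Möbius inversion applied to the familiar formula $\log n = \sum_{d\mid n}\Lambda(d)$.

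For Selberg's identity, I would start from the differential identity
\[
\frac{\zeta''(s)}{\zeta(s)} = \left(\frac{\zeta'(s)}{\zeta(s)}\right)' + \left(\frac{\zeta'(s)}{\zeta(s)}\right)^2,
\]
obtained by a one-line calculation via the quotient rule applied to $\zeta'/\zeta$. The left side, rewritten as $\zeta''(s)\cdot(1/\zeta(s))$, has Dirichlet coefficient $(\mu \ast \log^2)(n)=\sum_{d\mid n}\mu(d)\log^2(n/d)$. Differentiating $-\sum_n\Lambda(n)/n^s$ term-by-term shows the first summand on the right contributes $\Lambda(n)\log n$ to the coefficient of $n^{-s}$; the second summand, being a Dirichlet square, contributes $(\Lambda\ast\Lambda)(n)$. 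Equating coefficients yields Selberg's identity. There is no genuine obstacle in either case — both results are entirely classical, and the only routine check is absolute convergence of all series in $\Re(s)>1$, which is immediate from the crude bounds $\Lambda(n),|\mu(n)|,(\log n)^k \ll_\varepsilon n^\varepsilon$.
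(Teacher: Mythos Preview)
Your argument is correct: both identities follow by matching Dirichlet coefficients in $\Re(s)>1$, using $-\zeta'/\zeta = (-\zeta')\cdot(1/\zeta)$ for the first and the quotient-rule identity $\zeta''/\zeta = (\zeta'/\zeta)' + (\zeta'/\zeta)^2$ for the second. The paper itself states Lemma~\ref{Mobius} without proof, treating both formulae as classical (and indeed they are --- the first is the standard M\"obius-inversion characterisation of $\Lambda$, and the second is Selberg's identity), so your write-up in fact supplies more detail than the paper does.
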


We will also need the logarithmic derivative for $Z_1(s)$, which we note here for completeness as
\begin{equation}\label{eq:logderivZ1FE}
    \frac{Z_1'}{Z_1}(s) = \frac{\chi '}{\chi} (s) - \frac{Z_1'}{Z_1}(1-s).
\end{equation}
This follows from (2) in Lemma \ref{lem:CGLemma}.

Finally, we need a Dirichlet series for this logarithmic derivative of $Z_1 (s)$, or at least an expansion that looks like a Dirichlet series.
\begin{lemma}\label{lem:Dirichletlogderiv}
    For $\Re(s)> 1$ and $t \geq 100$, we have
\[
\frac{Z_1'}{Z_1}(s) = \sum_{n=1}^\infty \frac{a(n,s)}{n^s} + O \left( \frac{1}{t \log t} \right)
\]
where
\begin{equation*}
    a(n,s) = -\Lambda (n) + \sum_{k=1}^{\infty} \frac{1}{f(s)^k} a_k(n)
\end{equation*}
with
\begin{equation}\label{eq:f(s)}
f(s) = - \frac{1}{2} \frac{\chi '}{\chi} (s)
\end{equation}
and $a_k(n)$ is of the form
\begin{equation}\label{eq:ak(n)}
a_k(n) = \left((\Lambda \log) \ast \Lambda_{k-1}\right)(n),
\end{equation}
where we use the notation $\Lambda_k$ to denote
\[
\underbrace{\Lambda \ast \Lambda \ast \Lambda \ast \dots  \ast \Lambda}_{k-1 \text{ convolutions}}
\]
with the convention that $\Lambda_0(n)$ takes the value $1$ if $n=1$ and $0$ otherwise, and where $\Lambda_1 (n)= \Lambda (n)$ is the usual von Mangoldt function, and for clarity $\Lambda_2 (n) = (\Lambda \ast \Lambda)(n)$.
\end{lemma}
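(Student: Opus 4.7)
The plan is to obtain the claimed Dirichlet-type expansion by a single algebraic manipulation: factor $Z_1(s)$ so that the $\Lambda$-type Dirichlet series appear explicitly, and then expand a geometric series in $1/f(s)$, which is small because $|f(s)|\asymp\log t$.

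Starting from the absolutely convergent series $\zeta'(s)/\zeta(s) = -\sum_n \Lambda(n)/n^s$ for $\Re s > 1$, I would first factor
\[
Z_1(s) = \zeta(s)\left( f(s) - \sum_{n=1}^{\infty} \frac{\Lambda(n)}{n^s} \right) = \zeta(s)\, f(s)\,\bigl(1 - G(s)\bigr),
\]
where $G(s) = \tfrac{1}{f(s)}\sum_n \Lambda(n)/n^s$. Logarithmic differentiation then yields
\[
\frac{Z_1'}{Z_1}(s) = -\sum_{n=1}^\infty \frac{\Lambda(n)}{n^s} + \frac{f'(s)}{f(s)\bigl(1-G(s)\bigr)} + \frac{1}{f(s)\bigl(1-G(s)\bigr)}\sum_{n=1}^\infty \frac{\Lambda(n)\log n}{n^s}.
\]
By \eqref{eq:ChiPrimeOverChi}, $|f(s)|\asymp\log t$ uniformly for $|\sigma|\leq 2$ and $t\geq 100$, while $\sum_n\Lambda(n)/n^s$ is bounded for fixed $\sigma>1$, so $|G(s)|\ll 1/\log t$ and the geometric series $1/(1-G(s)) = \sum_{k\geq 0}G(s)^k$ converges. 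Writing $G(s)^k = f(s)^{-k}\sum_n \Lambda_k(n)/n^s$ and substituting into the last term above, the Dirichlet convolution identity produces
\[
\sum_{k=0}^\infty \frac{1}{f(s)^{k+1}}\sum_n \frac{((\Lambda\log)\ast\Lambda_k)(n)}{n^s} = \sum_{k=1}^\infty \frac{1}{f(s)^k}\sum_n \frac{a_k(n)}{n^s},
\]
which is precisely the contribution of the $a_k(n)$ to the claimed expansion. Combined with the $-\sum_n\Lambda(n)/n^s$ term this gives the stated Dirichlet series $\sum_n a(n,s)/n^s$, with the leftover piece being $f'(s)/(f(s)(1-G(s)))$.

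The main obstacle, then, is to show that this leftover is $O(1/(t\log t))$, which reduces to bounding $f'(s) = -\tfrac12(\chi'/\chi)'(s)$. The factor $1/(1-G(s)) = 1 + O(1/\log t) = O(1)$ is harmless, so I would upgrade \eqref{eq:ChiPrimeOverChi} to a derivative bound, either by invoking the Stirling expansion of $\log\chi(s)$ (which gives $(\chi'/\chi)'(s)\ll 1/t$ directly, since differentiating $-\log(t/2\pi)$ with respect to $s$ contributes at the level of the error term) or by applying Cauchy's integral formula on a disc of bounded radius around $s$, using the $O(1/t)$ error in \eqref{eq:ChiPrimeOverChi} made uniform on the disc. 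Combined with $|f(s)|\asymp\log t$ this yields $f'(s)/f(s) \ll 1/(t\log t)$, completing the argument.
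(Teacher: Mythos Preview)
Your argument is correct and in fact cleaner than the paper's. Both proofs hinge on the same geometric expansion of $1/(f(s)+\zeta'/\zeta(s))$ in powers of $f(s)^{-1}$, and both isolate the same error term $f'(s)/(f(s)+\zeta'/\zeta(s))$. The difference lies in how the Dirichlet coefficients are extracted. The paper differentiates $Z_1$ directly to get $(\zeta''+f\zeta')/(\zeta'+f\zeta)$, then must multiply the numerator series $\sum(\log^2 n - f(s)\log n)n^{-s}$ against $1/\zeta(s)=\sum\mu(n)n^{-s}$, invoke the M\"obius identities of Lemma~\ref{Mobius} (including Selberg's identity) to turn these into $\Lambda\log+\Lambda_2$ and $\Lambda$, and finally observe a telescoping cancellation between $(\Lambda_2/f-\Lambda)$ and the geometric tail to be left with $-\Lambda$ plus the $(\Lambda\log)\ast\Lambda_{k-1}$ terms. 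Your factorisation $Z_1=\zeta\cdot f\cdot(1-G)$ sidesteps all of this: the logarithmic derivative of $\zeta$ gives $-\sum\Lambda(n)n^{-s}$ immediately, and the derivative of $G$ produces the $\Lambda\log$ series directly, so the convolutions $(\Lambda\log)\ast\Lambda_{k-1}$ fall out of $G'/(1-G)$ without any M\"obius inversion or telescoping. One small point to tighten: you justify $|G(s)|<1$ by saying $\sum\Lambda(n)n^{-s}$ is bounded for \emph{fixed} $\sigma>1$, but the lemma is applied later at $\sigma=1+1/\log T$, so uniformity as $\sigma\to 1^+$ matters; the paper handles this by invoking Littlewood's bound $\zeta'/\zeta(s)=O(\log\log t)$ under RH, and you should do the same.
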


\begin{remark}
This way of writing the logarithmic derivative of $Z_1(s)$ is another point of deviation from Conrey and Ghosh's method. They set $f(s) = \frac{1}{2} \log T$ and treat this as a constant. By keeping $f(s)$ exact, we are able to retain more information for the asymptotic, as we will see later in the argument.
\end{remark}

\begin{proof}
We begin by writing
\[
Z_1(s) = \zeta'(s) + f(s) \zeta(s)
\]
where $f(s) = - \frac12 \frac{\chi'}{\chi}(s)$. Taking logarithmic derivatives we have
\begin{equation}\label{eq:logderivZ1}
    \frac{Z_1'}{Z_1}(s) = \frac{\zeta''(s) +  f(s) \zeta'(s)}{\zeta'(s) + f(s) \zeta(s)} + f'(s) \frac{1}{\frac{\zeta'}{\zeta}(s) + f(s)}
\end{equation}
For $\Re(s)>1$ with $t\geq 1$, the last term can be bounded by $O(1/t \log t)$ by noting that $f'(s) = O(1/t)$ (which follows from \eqref{eq:ChiPrimeOverChi}), $f(s) = O(\log t)$ and $\zeta'/\zeta(s) = O(\log\log t)$ (due to Littlewood \cite{Lit24}, under the Riemann Hypothesis) in the region under consideration.

For $s$ with sufficiently large real part, it is sensible to factor the denominator as
\begin{align*}
    \frac{1}{\zeta'(s) + f(s) \zeta(s)} &= \frac{1}{f(s)} \frac{1}{\zeta(s)}\left(1 + \frac{1}{f(s)}  \frac{\zeta'}{\zeta}(s) \right)^{-1}\\
    &= \frac{1}{f(s)} \frac{1}{\zeta(s)} \sum_{j=0}^\infty \frac{1}{f(s)^j} \left(-\frac{\zeta'}{\zeta}(s)\right)^j
\end{align*}
where we have expanded the last term on the first line as a geometric series. This expansion will be valid so long as $\Re(s)$ is large enough so that $\left| \frac{1}{f(s)}  \frac{\zeta'}{\zeta}(s)  \right| \leq 1$. That is, we can take $\Re (s)>1$ and $t \geq 100$.

Since $\Re(s)>1$, we may substitute the following Dirichlet series into the first term on the right hand side of \eqref{eq:logderivZ1}
\begin{align*}
    &\zeta''(s) = \sum_{n=1}^\infty \frac{\log^2(n)}{n^s} &\zeta'(s) &= -\sum_{n=1}^\infty \frac{\log(n)}{n^s}\\
    &\frac{1}{\zeta(s)} = \sum_{n=1}^\infty \frac{\mu(n)}{n^s}
    &-\frac{\zeta'}{\zeta}(s) &= \sum_{n=1}^\infty \frac{\Lambda(n)}{n^s}
\end{align*}
which gives
\[
\frac{\zeta''(s) +  f(s) \zeta'(s)}{\zeta'(s) + f(s) \zeta(s)} = \left(\sum_{n=1}^\infty \frac{\log^2 n}{n^s}  - f(s) \sum_{n=1}^\infty \frac{\log n}{n^s} \right) \frac{1}{f(s)} \sum_{n=1}^\infty \frac{\mu(n)}{n^s}   \sum_{j=0}^\infty \frac{1}{f(s)^j} \left(\sum_{n=1}^\infty \frac{\Lambda(n)}{n^s} \right)^j.
\]

We now employ the two M\"obius identities from Lemma \ref{Mobius} to give
\begin{multline} \label{eq:ConvPlusMobius}
    \left(\sum_{n=1}^\infty \frac{\log^2 n}{n^s}  - f(s) \sum_{n=1}^\infty \frac{\log n}{n^s} \right) \frac{1}{f(s)} \sum_{n=1}^\infty \frac{\mu(n)}{n^s} \\
    = \frac{1}{f(s)} \sum_{n=1}^\infty \frac{\Lambda(n) \log(n) }{n^s} + \frac{1}{f(s)} \sum_{n=1}^\infty \frac{\Lambda_2(n)}{n^s} - \sum_{n=1}^\infty \frac{\Lambda(n)}{n^s}.
\end{multline}

Finally, consider the Dirichlet series coming from the geometric expansion. They will consist of multiple convolutions, and recall we write $\Lambda_k$ to denote $k-1$ convolutions of $\Lambda$ with itself. This means
\begin{multline*}
\sum_{j=0}^\infty \frac{1}{f(s)^j} \left(\sum_{n=1}^\infty \frac{\Lambda(n)}{n^s} \right)^j = 1 + \frac{1}{f(s)} \sum_{n=1}^\infty \frac{\Lambda(n)}{n^s} + \frac{1}{f(s)^2} \sum_{n=1}^\infty \frac{\Lambda_2(n)}{n^s} \\
+ \frac{1}{f(s)^3} \sum_{n=1}^\infty \frac{\Lambda_3(n)}{n^s}  + \frac{1}{f(s)^4} \sum_{n=1}^\infty \frac{\Lambda_4 (n)}{n^s} + \dots
\end{multline*}

Consider the last two Dirichlet sums in \eqref{eq:ConvPlusMobius} multiplied with the above terms coming from the geometric expansion. Note they telescope nicely. Using the above, we have
\begin{multline*}
\left(\frac{1}{f(s)} \sum_{n=1}^\infty \frac{\Lambda_2(n)}{n^s} - \sum_{n=1}^\infty \frac{\Lambda(n)}{n^s} \right) \left(1 + \frac{1}{f(s)} \sum_{n=1}^\infty \frac{\Lambda(n)}{n^s} + \frac{1}{f(s)^2} \sum_{n=1}^\infty \frac{\Lambda_2(n)}{n^s} + \dots\right) \\
= \frac{1}{f(s)} \sum_{n=1}^\infty \frac{\Lambda_2(n)}{n^s} + \frac{1}{f^2(s)} \sum_{n=1}^\infty \frac{\Lambda_3(n)}{n^s} + \frac{1}{f(s)^3} \sum_{n=1}^\infty \frac{\Lambda_4(n)}{n^s} + \dots \\
- \sum_{n=1}^\infty \frac{\Lambda(n)}{n^s} - \frac{1}{f(s)} \sum_{n=1}^\infty \frac{\Lambda_2(n)}{n^s} - \frac{1}{f^2(s)} \sum_{n=1}^\infty \frac{\Lambda_3(n)}{n^s} - \dots
= -\sum_{n=1}^\infty \frac{\Lambda(n)}{n^s}
\end{multline*}

Returning to \eqref{eq:ConvPlusMobius}, we still will have contributions coming from the convolutions of $\Lambda(n)\log(n)$ with $\Lambda_k(n)$, which we will write as $ (\Lambda \cdot \log) \ast \Lambda_k $. For clarity, this means
\[
\left((\Lambda \cdot \log) \ast \Lambda_k \right)(n) = \sum_{d \mid n}  \Lambda(d) \log(d) \Lambda_k (\tfrac{n}{d}).
\]

Putting everything together we have
\begin{multline*}
\frac{Z_1'}{Z_1}(s) = - \sum_{n=1}^\infty \frac{\Lambda(n)}{n^s} + \frac{1}{f(s)} \sum_{n=1}^\infty \frac{\Lambda(n) \log(n) }{n^s} + \frac{1}{f(s)^2} \sum_{n=1}^\infty \frac{\left((\Lambda \cdot \log) \ast \Lambda \right)(n) }{n^s}\\
+ \frac{1}{f(s)^3} \sum_{n=1}^\infty \frac{\left((\Lambda \cdot \log) \ast \Lambda_2 \right)(n) }{n^s}  + \frac{1}{f(s)^4} \sum_{n=1}^\infty \frac{\left((\Lambda \cdot \log) \ast \Lambda_3\right)(n) }{n^s} + \dots + O \left( \frac{1}{t \log t} \right).
\end{multline*}
\end{proof}

We will be applying the method of stationary phase at various points throughout our argument. The following version of this result can be found in \cite{KaraYil11}, which is based on a similar result due to Gonek \cite{Gonek84}, where he took $k$ in the following lemma to be a non-negative integer.
\begin{lemma}\label{lem:StatPhase}
Let $\{b_m\}_{m=1}^{\infty}$ be a sequence of complex numbers such that for any $\varepsilon>0$, $b_m \ll m^\varepsilon$. Let $c>1$ and suppose $|k| = o(\log T)$ as $T \rightarrow \infty$. Then for $T$ sufficiently large,
\[
\frac{1}{2 \pi} \int_{1}^{T} \chi (1-c-it) \left( \log \frac{t}{2 \pi} \right)^k \left( \sum_{m=1}^{\infty} b_m m^{-c -it} \right) \ dt = \sum_{1 \leq m \leq \frac{T}{2 \pi}} b_m (\log m)^k + O(T^{c-\frac{1}{2}} (\log T)^k).
\]
\end{lemma}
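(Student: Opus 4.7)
The plan is to invoke Stirling's asymptotic for the functional equation factor,
\[
\chi(1-c-it) = \left(\tfrac{t}{2\pi}\right)^{c+it-1/2} e^{-i(t+\pi/4)}\bigl(1+O(1/t)\bigr),
\]
valid for $t$ large, and then exploit that the Dirichlet series converges absolutely and uniformly for $c>1$ with $b_m\ll m^\varepsilon$ to interchange sum and integral. The claim then reduces to estimating, for each $m$, the oscillatory integral
\[
I_m := \int_1^T \left(\tfrac{t}{2\pi}\right)^{c-1/2}\!\left(\log\tfrac{t}{2\pi}\right)^{k} e^{i\phi_m(t)}\,dt, \qquad \phi_m(t) = t\log\tfrac{t}{2\pi m} - t - \tfrac{\pi}{4},
\]
whose stationary point $t_0 = 2\pi m$ satisfies $\phi''_m(t_0) = 1/(2\pi m)$ and lies inside $[1,T]$ precisely when $m \leq T/(2\pi)$.

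In the bulk range $1\leq m \leq T/(2\pi) - T^{1/2}\log^2 T$, I would apply the standard saddle-point formula to obtain
\[
I_m = 2\pi\, m^c(\log m)^k + O\!\bigl(m^{c-3/2}(\log m)^k\bigr),
\]
the tidy leading constant appearing because $\phi_m(t_0) = -2\pi m - \pi/4$, the Gaussian factor equals $\sqrt{2\pi/\phi''_m(t_0)}\,e^{i\pi/4} = 2\pi\sqrt{m}\,e^{i\pi/4}$, and $e^{-2\pi i m}=1$, so the phases cancel. Multiplying by $b_m m^{-c}/(2\pi)$ and summing reproduces $\sum_{m\leq T/(2\pi)} b_m (\log m)^k$, while the saddle-point remainders sum to $O((\log T)^k)$, which is absorbed into $O(T^{c-1/2}(\log T)^k)$. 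For the tail $m \geq T/(2\pi) + T^{1/2}\log^2 T$, repeated integration by parts in $t$ (using that $|\phi'_m(t)|=|\log(t/2\pi m)|$ is uniformly bounded below on $[1,T]$) shows that $|I_m|$ decays sufficiently in $m$ that $\sum_m |b_m| m^{-c} |I_m| \ll T^{c-1/2}(\log T)^k$.

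The main obstacle is the transition band $|m - T/(2\pi)| \leq T^{1/2}\log^2 T$, of width $O(T^{1/2}\log^2 T)$, where the saddle lies within $O(\sqrt{T})$ of the endpoint $T$, so neither the pure saddle-point expansion nor pure integration-by-parts is individually valid. I would handle this by van der Corput's second derivative test: since $\phi''_m(t) = 1/t \geq 1/T$ on $[1,T]$, the bare oscillatory integral is bounded by $O(\sqrt{T})$, and partial summation against the $C^1$ amplitude $(t/2\pi)^{c-1/2}(\log t/2\pi)^{k}$ yields $|I_m| \ll T^{c}(\log T)^k$. Combined with $|b_m| m^{-c}\ll T^{\varepsilon-c}$ and the $O(T^{1/2}\log^2 T)$ count of terms, the band contributes at most $O(T^{1/2+\varepsilon}(\log T)^{k+2})$, which is $\ll T^{c-1/2}(\log T)^k$ provided $\varepsilon < c-1$ (permissible since $c$ is fixed and exceeds $1$, and $\varepsilon$ is free).

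Finally, the hypothesis $|k|=o(\log T)$ enters when passing from the amplitude $(\log(t/2\pi))^k$ to $(\log m)^k$ at the saddle: expanding $\log(t/2\pi) = \log m + \log(t/t_0)$ over the effective saddle neighbourhood of width $O(\sqrt{m})$ gives a relative error of size $O(|k|/\log T) = o(1)$, which is absorbed into the stated error. The $O(1/t)$ Stirling error and the contribution near $t=1$ are routine and uniform in $k$. The overall architecture mirrors Gonek's argument for nonnegative integer $k$; the novelty in this version, following Karatsuba--Y\i ld\i r\i m, is the uniformity of each of the above estimates across the wider range $|k| = o(\log T)$.
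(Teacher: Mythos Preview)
The paper does not supply its own proof of this lemma; it simply cites the result from Karatsuba--Y\i ld\i r\i m, noting that it extends Gonek's earlier version with nonnegative integer $k$. Your sketch reproduces precisely the standard architecture of that argument---Stirling for $\chi(1-c-it)$, interchange of sum and integral by absolute convergence, saddle at $t_0=2\pi m$, and the three-way split into bulk, tail, and transition band---so your approach is the same as that of the cited references and is correct in outline.
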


\section{Proof of Theorem \ref{thm:LowerOrderTerms}}\label{Proof}
\subsection{Setting up the proof}
The start of our proof follows that of Conrey and Ghosh \cite{ConGho85} but we recreate it here for completeness sake. As we have already noted, under the Riemann Hypothesis the zeros of $Z'(t)$ are interlaced with those of $Z(t)$, and so the set of points where $|\zeta (1/2 +it)|$ achieves its maxima are exactly the points where $Z'(t) = 0$. We know that $Z_1(1/2+it) = 0$ if and only if $Z'(t) =0$ by part (1) of Lemma \ref{lem:CGLemma}.

Let  $\rho_{1} = \beta_{1} + i \gamma_{1}$ be the zeros of $Z_1(s)$. By part (3) of Lemma \ref{lem:CGLemma}  $Z_1(s)$ has $O(\log T)$ zeros off the critical line (that is, $\beta_1 \neq 1/2$). At these zeros, using part (4) of Lemma \ref{lem:CGLemma} and the Lindel\"of bounds on zeta, we have
\[
\sum_{\substack{0< \gamma_{1} \leq T \\ \beta_1 \neq 1/2}} \zeta (\rho_{1}) \zeta(1-\rho_1) \ll T^{1/9+\epsilon} T^\epsilon \log T \ll T^{1/8}.
\]

Therefore if $\gamma$ and $\gamma^+$ denote consecutive zeros of the zeta function (which under the Riemann Hypothesis are real zeros of $Z(t)$) we have
\begin{equation*}
    \sum_{0< \gamma \leq T} \max_{\gamma \leq t \leq \gamma^+} \left| \zeta \left( \frac{1}{2} + it \right) \right|^2 =  \sum_{\substack{0 < \gamma_1 \leq T \\ \beta_1 = 1/2}} \left| \zeta \left( \frac{1}{2} + i\gamma_1 \right) \right|^2 .
\end{equation*}

We can write this as an integral using Cauchy's theorem,
\begin{align*}\label{eq:Cauchy}
\frac{1}{2\pi i} \int_{\mathcal{C}} \frac{Z_1'}{Z_1}(s) \zeta (s) \zeta (1-s) \ ds &= \sum_{0 < \gamma_1 \leq T} \zeta \left( \rho_1 \right) \zeta \left( 1-\rho_1 \right) \\
&= \sum_{\substack{0 < \gamma_1 \leq T \\ \beta_1 = 1/2}} \left| \zeta \left( \frac{1}{2} + i\gamma_1 \right) \right|^2 + O\left(T^{1/8} \right)
\end{align*}
where $\mathcal{C}$ is a positively oriented contour with vertices $c + i$, $c + iT$, $1-c+iT$, and $1-c +i$, where $c=1+1/\log T$. We may assume, without loss of generality, that the distance from the contour to any zero $\rho_1$ of $Z_1(s)$ is uniformly $\gg 1/\log T$.

\begin{remark}
    Note that this contour $\mathcal{C}$ is the first time we diverge from the method of Conrey and Ghosh \cite{ConGho85}, since their $c$ is such that the contour lies inside the critical strip, and they go from $T$ to $T+U$, where $U=T^{3/4}$. They do this because in their definition of $Z_1(s)$ they use the approximation
    \[
    - \frac12 \frac{\chi'}{\chi}(s) \approx \frac12 \log \frac{T}{2 \pi}
    \]
    to obtain
    \[
    Z_1(s) \approx \zeta'(s) + \frac12 \log \frac{T}{2 \pi} \zeta (s).
    \]
    This approximation for $\chi'/\chi (s)$ relies on $T$ staying roughly the same size. They then use dyadic sums to get their result for length $T$. Since we keep the factor of $\chi'/\chi (s)$ until late in our proof, we are able to take the height of our contour of length $T$ and avoid any potential issues with dyadic summation.
\end{remark}

\subsection{Initial manipulations of the Cauchy integral}

By considering the Hadamard product for $Z_1(s)$ (which has $N(T)$ zeros on the critical line and $O(\log T)$ zeros off the critical line), one can show that
\[
\frac{Z_1'}{Z_1}(s) = \sum_{|s-\rho_1|<1} \frac{1}{s-\rho} + O(\log T)
\]
so if $|T-\rho_1| \gg 1/\log T$ for all zeros $\rho_1$ of $Z_1$, we have
\[
\frac{Z_1'}{Z_1}(\sigma + i T) \ll (\log T)^2
\]
uniformly for $-1<\sigma \leq 2$.

Therefore, using the Lindel\"of bounds, the integral along the horizontal sides of the contour are bounded by $O \left( T^{c-1/2+\epsilon} \right)$.

All that remains to calculate are the two vertical sides. That is,
\begin{multline*}
    \frac{1}{2\pi i} \int_{\mathcal{C}} \frac{Z_1'}{Z_1}(s) \zeta (s) \zeta (1-s) \ ds \\
    = \frac{1}{2\pi i} \int_{c+i}^{c+iT} \frac{Z_1'}{Z_1}(s) \chi(1-s) \zeta^2 (s) \ ds + \frac{1}{2\pi i} \int_{1-c+i}^{1-c+iT} \frac{Z_1'}{Z_1}(s) \chi(s) \zeta^2 (1-s) \ ds + O \left( T^{c-1/2+\epsilon} \right)\\
    = I_1 + I_2 + O \left( T^{c-1/2+\epsilon} \right),
\end{multline*}
where we have used the functional equation for $\zeta (s)$ in each integral.

Next, we use \eqref{eq:logderivZ1FE}, that is, the logarithmic derivative of the functional equation for $Z_1(s)$, together with using the change of variables $s \mapsto 1-s$ to write
\[
I_2 = \overline{I_1} - \frac{1}{2\pi i} \int_{1-c+it}^{1-c+iT} \frac{\chi'}{\chi}(s) \zeta (s) \zeta (1-s) \ ds.
\]
Therefore we have
\begin{equation*}
    \frac{1}{2\pi i} \int_{\mathcal{C}} \frac{Z_1'}{Z_1}(s) \zeta (s) \zeta (1-s) \ ds
    = 2\Re(I_1) - \frac{1}{2\pi i} \int_{1-c+it}^{1-c+iT} \frac{\chi'}{\chi}(s) \zeta (s) \zeta (1-s) \ ds +O \left( T^{c-1/2+\epsilon} \right).
\end{equation*}
We may now move the line of integration in the twisted integral to the half-line, obtaining
\[
    - \frac{1}{2 \pi} \int_{1}^{T}\frac{\chi^{\prime}}{\chi}\left(\frac{1}{2}+it\right)\left|\zeta \left(\frac{1}{2}+it \right)\right|^2 \ dt + O \left( T^{c-1/2+\epsilon} \right).
\]
This is exactly the second twisted moment integral evaluated in Lemma \ref{lem:TwistSecond}.

\begin{remark}
    Here is a trivial point of deviation from Conrey and Ghosh \cite{ConGho85}. Since they are only after leading order behaviour, they ignore the lower-order terms from the twisted second moment, whereas since we are after a full asymptotic, we are obviously required to keep track of all lower-order terms throughout the argument.
\end{remark}

Therefore, to complete the argument,  we need to calculate $2 \Re (I_1)$.

\subsection{Calculating the main terms}
We begin this subsection by giving a rough overview of what we are aiming to do here as it is the most complicated part of the argument. Since we are to the right of the abscissa of absolute convergence for our $L$-functions, we can write them as their respective Dirichlet series. After this, we use the method of stationary phase to write each integral as a sum of length $T$. Finally, we use Perron's formula to rewrite these sums as an integral, and after shifting the integral past the pole at $s=1$, we may collect residues to obtain a full asymptotic.

We already know that we can write
\begin{equation}\label{eq:zetasquared}
    \zeta^2 (s) = \sum_{n=1}^{\infty} \frac{d(n)}{n^s}
\end{equation}
for $\Re (s) >1$, and where $d(n) = \sum_{d|n} 1$.

We have shown that in Lemma \ref{lem:Dirichletlogderiv} for $\Re(s)>1$ and $t \geq 100$ that
\[
\frac{Z_1'}{Z_1}(s) = \sum_{n=1}^\infty \frac{a(n,s)}{n^s}
\]
where
\begin{equation*}
    a(n,s) = -\Lambda (n) + \sum_{k=1}^{\infty} \frac{1}{f(s)^k} a_k(n)
\end{equation*}
where $a_k(n)$ is given by \eqref{eq:ak(n)} and $f(s)$ is given by \eqref{eq:f(s)}.

While the integral $I_1$ has been from $c+i$ to $c+iT$, we now take it from $c+100i$ to $c+iT$ at the cost of an error of size $O(1)$.

By absolute convergence, we can swap the integral and the sum over $k$, obtaining
\begin{align*}
        I_1 &= \frac{1}{2 \pi i} \int_{c + 100i}^{c+ iT} \chi (1-s) \sum_{n=1}^{\infty} \frac{-\Lambda (n)}{n^s} \zeta (s) ^2 \ ds  \\
        &\qquad+ \sum_{k=1}^{\infty} \frac{1}{2 \pi i} \int_{c+100i}^{c + iT} \chi (1-s) \frac{1}{f(s)^k} \sum_{n=1}^{\infty} \frac{a_k(n)}{n^s} \zeta (s) ^2 \ ds \\
        &= J_{0} + \sum_{k=1}^\infty J_{k},
\end{align*}
say.

\begin{lemma}
    Under the Riemann Hypothesis we have
    \[
    2J_0 - \frac{1}{2\pi}\int_{1}^{T}\frac{\chi^{\prime}}{\chi} \left(\frac{1}{2} + it \right) \left|\zeta \left(\frac{1}{2} + it \right) \right|^2 \ dt = O\left(T^{1/2+\epsilon}\right) .
    \]
\end{lemma}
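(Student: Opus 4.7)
The plan is to collapse the integrand of $J_0$ to a single Dirichlet series, convert it via the stationary-phase Lemma~\ref{lem:StatPhase} into an arithmetic sum of length $T/(2\pi)$, evaluate that sum, and match the result against the expansion already furnished by Lemma~\ref{lem:TwistSecond}. The starting observation is that $\sum_n \Lambda(n)/n^s = -\zeta'/\zeta(s)$, so the integrand of $J_0$ simplifies to $\chi(1-s)\,\zeta'(s)\zeta(s)$; then the derivative identity $\zeta'\zeta = \tfrac{1}{2}(\zeta^2)' = -\tfrac{1}{2}\sum_n d(n)\log(n)/n^s$ puts it in Dirichlet form $-\tfrac{1}{2}\chi(1-s)\sum_n d(n)\log(n)/n^s$.

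Applying Lemma~\ref{lem:StatPhase} with $k=0$ and $b_n = d(n)\log n \ll n^\varepsilon$ (and absorbing the portion of the integral between heights $1$ and $100$ into $O(1)$) then gives
\[
J_0 = -\frac{1}{2} \sum_{n \leq T/(2\pi)} d(n) \log n + O\!\left( T^{c - 1/2 + \varepsilon} \right),
\]
which is $O(T^{1/2+\varepsilon})$ in light of $c = 1 + 1/\log T$.

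To finish, I would evaluate the summatory function by partial summation from Dirichlet's divisor asymptotic $\sum_{n \leq x} d(n) = x \log x + (2\gamma_0 - 1) x + O(x^{1/2})$, obtaining
\[
\sum_{n \leq x} d(n) \log n = x \log^2 x + 2(\gamma_0 - 1) x \log x + 2(1 - \gamma_0) x + O(x^{1/2 + \varepsilon}).
\]
With $x = T/(2\pi)$ so that $\log x = L$, doubling and negating yields
\[
2 J_0 = -\frac{T}{2\pi} L^2 + (2 - 2\gamma_0) \frac{T}{2\pi} L + (2\gamma_0 - 2) \frac{T}{2\pi} + O(T^{1/2 + \varepsilon}),
\]
which coincides term-by-term with $\tfrac{1}{2\pi} \int_{1}^{T} (\chi'/\chi)(\tfrac{1}{2} + it) |\zeta(\tfrac{1}{2} + it)|^2 \, dt$, read off from Lemma~\ref{lem:TwistSecond} after negating both sides. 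The only thing requiring real care is keeping the signs straight across the three negations (from the $-\Lambda(n)$ coefficient, from the derivative identity, and from $\chi'/\chi < 0$ on the critical line); notably, the Riemann Hypothesis is not invoked in this step, since the divisor-problem error $O(x^{1/2})$ is unconditional.
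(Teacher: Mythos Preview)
Your proof is correct and takes a genuinely more direct route than the paper's. The key simplification you make is to collapse the Dirichlet convolution $(\Lambda\ast d)(n)$ into $\tfrac12 d(n)\log n$ via the identity $-\zeta'\zeta = \tfrac12(\zeta^2)'$ \emph{before} applying stationary phase; this lets you land on the single sum $\sum_{n\leq x} d(n)\log n$, which you then handle by partial summation against the classical divisor estimate $D(x)=x\log x+(2\gamma_0-1)x+O(x^{1/2})$.

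The paper instead keeps the convolution form $-\sum_{nm\leq T/2\pi}\Lambda(n)d(m)$, reconverts it to a contour integral of $\zeta'\zeta(s)\,x^s/s$ by Perron's formula, and then shifts the line of integration to $\Re(s)=\tfrac12+1/\log T$ to pick up the residue at $s=1$. That shift needs a Lindel\"of-type bound on $\zeta'\zeta$ near the critical line to control the shifted integral as $O(x^{1/2+\varepsilon})$, which is where the Riemann Hypothesis enters their argument. Your approach sidesteps both the Perron step and the contour shift entirely, so---as you correctly observe---it is unconditional at this point: the $O(x^{1/2})$ error in the divisor problem and the $O(T^{c-1/2+\varepsilon})$ error from Lemma~\ref{lem:StatPhase} are both unconditional. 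The paper's approach is more uniform with how the later $J_k$ terms are handled (Perron plus residue), but yours is cleaner for $J_0$ alone.
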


\begin{proof}
Extending $J_0$ back to be over $1$ to $T$, and remembering that $c=1+1/\log T$ we may use a slight variation of Gonek's Lemma 5 \cite{Gonek84} to write the integral $J_0$ as
\begin{equation*}
J_0 = - \sum_{nm \leq T/2\pi} \Lambda (n) d(m) + O\left(T^{1/2+\epsilon} \right) .
\end{equation*}
By Perron's formula we have
\begin{equation*}
   - \sum_{nm \leq x} \Lambda (n) d(m) = \frac{1}{2 \pi i} \int_{3/2-i T}^{3/2+i T} \frac{\zeta'}{\zeta}(s) \zeta (s)^2 \frac{x^s}{s} \ ds + O\left(T^{1/2+\epsilon}\right)
\end{equation*}
where $x=T/2 \pi$.

By the standard methods of shifting the contour to the line with $\Re(s)=1/2+1/\log T$, and evaluating the residue from the pole at $s=1$, we have
\begin{equation}\label{eq:J0}
    J_0 = -\frac{T}{4 \pi} \left( \log \frac{T}{2 \pi} \right)^2 + \frac{T}{2 \pi} \log \frac{T}{2 \pi} (1-\gamma_0) + \frac{T}{2 \pi} (-1+\gamma_0) + O(T^{1/2 + \varepsilon})
\end{equation}
where $\gamma_0$ is Euler's constant.

By Lemma \ref{lem:TwistSecond},
\begin{multline*}
\frac{1}{2\pi}\int_{1}^{T}\frac{\chi^{\prime}}{\chi} \left(\frac{1}{2} + it \right) \left|\zeta \left(\frac{1}{2} + it \right) \right|^2 \ dt \\
=     \frac{T}{2 \pi} \left( \log \frac{T}{2 \pi} \right)^2 + \frac{T}{2 \pi} \log \frac{T}{2 \pi} (-2 + 2\gamma_0) + \frac{T}{2 \pi} (2-2\gamma_0) + O\left( T^{1/2 + \varepsilon}\right).
\end{multline*}
This perfectly matches double the evaluation of $J_0$ given in \eqref{eq:J0}, thus proving the lemma.
\end{proof}

That is, we have shown that
\begin{equation}\label{eq:ZetaSumInTermsOfJk}
\sum_{0< \gamma \leq T} \max_{\gamma \leq t \leq \gamma^+} \left| \zeta \left( \frac{1}{2} + it \right) \right|^2 = 2\Re \sum_{k=1}^\infty J_k + O\left( T^{1/2 + \varepsilon}\right)
\end{equation}
where
\begin{equation}\label{eq:Jk}
    J_k = \frac{1}{2 \pi i} \int_{c+100i}^{c + iT}  \chi (1-s)  \frac{1}{f(s)^k} \sum_{n=1}^{\infty} \frac{a_k(n)}{n^s} \zeta (s) ^2 \ ds
\end{equation}
where the $a_k(n)$ are given in \eqref{eq:ak(n)} and $f(s)$ is given in \eqref{eq:f(s)}.

\begin{lemma}\label{lem:JkStationary}
    We can write
    \begin{equation*}
    J_k = 2^k \sum_{nm \leq T/2 \pi}  \frac{a_k(n)}{(\log nm)^k} d(m) + O\left( T^{1/2+\varepsilon} \right).
\end{equation*}
where $a_k(n)$ are given in \eqref{eq:ak(n)} and $d(m)$ is the Divisor function.
\end{lemma}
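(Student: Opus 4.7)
The plan is a three-step reduction. First, replace $1/f(s)^k$ by its leading logarithmic behaviour in the integrand, gaining a clean power of $\log(t/2\pi)$. Second, fuse the two Dirichlet series appearing in the integrand into a single one. Third, apply the stationary phase Lemma \ref{lem:StatPhase} with its exponent set to $-k$.

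For the first step, on the contour $s = c+it$ with $c = 1 + 1/\log T$ and $t\geq 100$, estimate \eqref{eq:ChiPrimeOverChi} gives $f(s) = \tfrac{1}{2}\log(t/2\pi) + O(1/t)$, and hence
\[
\frac{1}{f(s)^k} = \frac{2^k}{\bigl(\log(t/2\pi)\bigr)^k} + O\!\left(\frac{2^k k}{t\,(\log t)^{k+1}}\right).
\]
Substituting the error term from this expansion into \eqref{eq:Jk} and using $|\chi(1-c-it)| \asymp (t/2\pi)^{c-1/2}$, $|\zeta(c+it)|^2 \ll (\log t)^2$, and absolute convergence of $\sum_n a_k(n)/n^c$ (which holds since $a_k(n) = ((\Lambda \cdot \log)\ast \Lambda_{k-1})(n) \ll n^\varepsilon$ for fixed $k$), produces a contribution bounded by $O(T^{c-1/2+\varepsilon}) = O(T^{1/2+\varepsilon})$.

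For the second step, since both series converge absolutely on $\Re(s) = c > 1$,
\[
\zeta^2(s) \sum_{n=1}^{\infty} \frac{a_k(n)}{n^s} = \sum_{N=1}^{\infty} \frac{B_k(N)}{N^s}, \qquad B_k(N) := \sum_{nm=N} a_k(n)\,d(m),
\]
with $B_k(N) \ll N^\varepsilon$. After extending the integration from $c+100i$ down to $c+i$ at a cost of $O(1)$, I would then apply Lemma \ref{lem:StatPhase} with coefficients $b_N = B_k(N)$ and with its parameter replaced by $-k$; this is permitted because $|-k|$ is bounded, hence $o(\log T)$. This yields
\[
\frac{2^k}{2\pi}\int_{1}^{T} \chi(1-c-it)\left(\log\frac{t}{2\pi}\right)^{-k} \sum_{N=1}^{\infty} \frac{B_k(N)}{N^{c+it}}\, dt = 2^k\!\!\sum_{N\leq T/2\pi} \frac{B_k(N)}{(\log N)^k} + O\!\left(T^{c-1/2}(\log T)^{-k}\right).
\]
Expanding $B_k(N)$ via its defining convolution (and noting that $a_k(1) = 0$ since $(\Lambda\cdot\log)(1) = 0$, so the potentially singular term $\log(nm) = 0$ is absent from the support) turns the sum into the desired $\sum_{nm\leq T/2\pi} a_k(n) d(m)/(\log nm)^k$, and the residual errors combine to the stated $O(T^{1/2+\varepsilon})$.

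The main obstacle I expect is the careful bookkeeping of the first step: verifying that the error from the expansion of $1/f(s)^k$ decays fast enough in $t$ to be absorbed into $T^{1/2+\varepsilon}$ despite the rapid growth factor $(t/2\pi)^{c-1/2}$ coming from $\chi(1-c-it)$, and ensuring the implicit constants, while allowed to depend on $k$ within this lemma, remain explicit enough that the downstream summation over $k$ in \eqref{eq:ZetaSumInTermsOfJk} can still be controlled.
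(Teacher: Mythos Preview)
Your proposal is correct and follows essentially the same approach as the paper: approximate $1/f(s)^k$ by $2^k/(\log(t/2\pi))^k$ using \eqref{eq:ChiPrimeOverChi}, absorb the resulting error into $O(T^{1/2+\varepsilon})$, and then apply Lemma~\ref{lem:StatPhase} with exponent $-k$. Your explicit packaging of the convolution into $B_k(N)$ and the observation that $a_k(1)=0$ avoids the $\log N = 0$ singularity are cosmetic refinements, but the substance is identical to the paper's argument.
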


\begin{proof}
First note that extending the integral to go from $c+i$ rather than $c+100i$ only introduces an error of size $O(1)$ which we will ignore as it will be consumed by other larger error terms. We begin by rewriting \eqref{eq:Jk} with $\zeta (s)^2$ written as its Dirichlet series expansion. That is,
\[
J_k = \frac{1}{2 \pi i} \int_{c+i}^{c + iT}  \chi (1-s)  \frac{1}{f(s)^k} \sum_{n=1}^{\infty} \frac{a_k(n)}{n^s} \sum_{m=1}^{\infty} \frac{d(m)}{m^s} \ ds,
\]
where $d(m)$ is the divisor counting function, with $d(m) = \sum_{d|m} 1$. Also, by \eqref{eq:ChiPrimeOverChi} and \eqref{eq:f(s)}, we can write
\begin{equation*}
f(s) = -\frac{1}{2} \frac{\chi'}{\chi} (s) = \frac{1}{2} \log \frac{t}{2 \pi} + O\left( \frac{1}{t} \right)
\end{equation*}
and so
\begin{equation*}
    \frac{1}{f(s)^k} = \frac{2^k}{(\log t/2\pi)^k} \left( 1 + O\left( \frac{1}{t \log t} \right) \right)^{-k} = \frac{2^k}{(\log t/2\pi)^k} \left( 1 + O\left( \frac{k}{t \log t} \right) \right).
\end{equation*}
Then
\begin{align*}
J_k &= \frac{1}{2 \pi i} \int_{c+i}^{c + iT}  \chi (1-s)  \frac{2^k}{(\log t/2\pi)^k} \left( 1 + O\left( \frac{k}{t \log t} \right) \right) \sum_{n=1}^{\infty} \frac{a_k(n)}{n^s} \sum_{m=1}^{\infty} \frac{d(m)}{m^s} \ ds \\
&= \frac{1}{2 \pi i} \int_{c+i}^{c + iT}  \chi (1-s)  \frac{2^k}{(\log t/2\pi)^k} \sum_{n=1}^{\infty} \frac{a_k(n)}{n^s} \sum_{m=1}^{\infty} \frac{d(m)}{m^s} \ ds + O \left( T^{1/2+\varepsilon} \right).
\end{align*}
By the method of stationary phase as given in Lemma \ref{lem:StatPhase}, the result follows.
\end{proof}

The issue is now clear - there is no obvious Dirichlet series for this sum for which we can apply Perron directly. Instead, we will use evaluate the partial sum of $a_k \ast d$ using Peron's formula, and then use partial summation to calculate $J_k$.

\begin{remark}
    Note that since we have kept $f(s)$ exact throughout this argument, we have genuine terms from the integral part of partial summation (which involes $f'(s)$). This contributes some subleading terms to the asymptotic but since these are only subleading, it will be clear why Conrey and Ghosh are able to set it equal to a constant and obtain their leading order behaviour (which we reaffirm in our argument).
\end{remark}

\begin{lemma}\label{lem:Ak}
    Let
    \[
    A_k(x) = \sum_{1 \leq n_1 n_2 ... n_k m \leq x} \log (n_1) \Lambda (n_1) \Lambda (n_2) \dots \Lambda (n_k) d(m) .
    \]
    which is the partial summation of $(a_k\ast d)(n)$ for $n$ up to $x$, where $a_k$ is given by \eqref{eq:ak(n)}.

    Then for large $x$,
    \[
    A_k(x) = x \sum_{j=0}^{k+2} \frac{c_{k,j}}{(k+2-j)!} (\log x)^{k+2-j} + O\left(x^{1/2+\epsilon}\right)
    \]
where the $c_{k,\ell}$ are the Laurent series coefficients around $s=1$ of
\[
\left(\frac{\zeta '}{\zeta} (s) \right)' \left(-\frac{\zeta '}{\zeta} (s) \right)^{k-1} \zeta^2 (s) \frac{1}{s} = \sum_{\ell=0}^\infty c_{k,\ell} (s-1)^{-k-3+\ell} .
\]
\end{lemma}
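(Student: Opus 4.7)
The plan is to recognise $A_k(x)$ as the partial sum $\sum_{n \le x}(a_k \ast d)(n)$ and attack it via Perron's formula. The Dirichlet series of $a_k \ast d$ factors neatly: using $(\zeta'/\zeta)'(s) = \sum_n \Lambda(n)\log(n)\,n^{-s}$ (obtained by differentiating $-\zeta'/\zeta(s) = \sum_n \Lambda(n) n^{-s}$), $(-\zeta'/\zeta)^{k-1}(s) = \sum_n \Lambda_{k-1}(n)\,n^{-s}$, and $\zeta^2(s) = \sum_n d(n)\,n^{-s}$, one obtains for $\Re(s)>1$
\[
\sum_{n=1}^{\infty} \frac{(a_k \ast d)(n)}{n^s} = D(s) := \left(\frac{\zeta'}{\zeta}\right)'(s)\left(-\frac{\zeta'}{\zeta}\right)^{k-1}(s)\,\zeta^2(s),
\]
which together with the $1/s$ factor from Perron is exactly the function whose Laurent expansion at $s=1$ furnishes the coefficients $c_{k,\ell}$ in the statement.

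Applying truncated Perron with $c = 1 + 1/\log x$ gives
\[
A_k(x) = \frac{1}{2\pi i}\int_{c-iT}^{c+iT} D(s)\,\frac{x^s}{s}\,ds + O\!\left(\frac{x^{1+\varepsilon}}{T}\right).
\]
I would then shift the contour leftwards to $\Re(s) = \tfrac12 + 1/\log x$, collecting the residue at $s=1$. Since $(\zeta'/\zeta)'(s)$ has a double pole, $(-\zeta'/\zeta)^{k-1}(s)$ a pole of order $k-1$, and $\zeta^2(s)$ a double pole at $s=1$, the integrand $D(s)x^s/s$ has a pole there of order $k+3$. Writing $x^s = x\sum_{m\ge 0}(\log x)^m (s-1)^m/m!$ and multiplying against $D(s)/s = \sum_\ell c_{k,\ell}(s-1)^{-k-3+\ell}$, the coefficient of $(s-1)^{-1}$ comes from matching $\ell + m = k+2$ and gives
\[
x \sum_{\ell = 0}^{k+2} \frac{c_{k,\ell}}{(k+2-\ell)!}(\log x)^{k+2-\ell},
\]
exactly the claimed main term.

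It remains to bound the contour integrals on the two horizontal segments and the shifted vertical line. Under RH we have $\zeta(\sigma+it)^{\pm 1} \ll t^{\varepsilon}$ and the Littlewood-type bound $\zeta'/\zeta(\sigma+it) \ll (\log t)^{2-2\sigma+\varepsilon}$ uniformly for $\sigma \ge \tfrac12 + \varepsilon$, both of which survive differentiation with only a $\log t$ loss. Since $k$ is fixed, these combine to give $D(s) \ll t^{\varepsilon}$ on the shifted line, so the vertical integral contributes $O(x^{1/2+\varepsilon})$ while the horizontal pieces contribute $O(x^{1+\varepsilon}/T)$; choosing $T = x^{1/2}$ balances everything at $O(x^{1/2+\varepsilon})$. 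The main obstacle is ensuring these RH bounds are uniform on the shifted contour despite the nearby nontrivial zeros of $\zeta$: since under RH they lie strictly on $\Re(s)=1/2$, shifting by $1/\log x$ keeps the contour away from them, and the polynomial-in-$\log t$ growth of $(\zeta'/\zeta)^{k-1}$ and its derivative is harmlessly absorbed into $t^{\varepsilon}$.
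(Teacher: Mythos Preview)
Your proposal is correct and follows essentially the same route as the paper: identify the Dirichlet series of $a_k\ast d$ as $(\zeta'/\zeta)'(-\zeta'/\zeta)^{k-1}\zeta^2$, apply truncated Perron, shift past the order-$(k+3)$ pole at $s=1$, and read off the residue by pairing the Laurent coefficients $c_{k,\ell}$ with the expansion of $x^s$. The only differences are cosmetic parameter choices---the paper takes $c=3/2$, shifts to $\Re(s)=1/2+\epsilon$, and balances with $R=x$, whereas you take $c=1+1/\log x$, shift to $\Re(s)=1/2+1/\log x$, and balance with $T=x^{1/2}$---and both lead to the same $O(x^{1/2+\epsilon})$ error. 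One small caveat: the Littlewood bound $(\zeta'/\zeta)(\sigma+it)\ll(\log t)^{2-2\sigma+\varepsilon}$ you cite is stated for $\sigma$ bounded away from $1/2$, so on your shifted line $\sigma=1/2+1/\log x$ you should instead invoke the standard RH bound $\zeta'/\zeta(s)=O((\log t)^2)$ valid for $\sigma\ge 1/2+c/\log t$; as you already note, any polynomial-in-$\log t$ growth is absorbed into $t^{\varepsilon}$, so this does not affect the argument.
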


\begin{remark}
A long but straightforward calculation shows that the first values of $c_{k,\ell}$ are
\begin{align*}
c_{k,0} &= 1 \\
c_{k,1} &= -1-(k-3)\gamma_0\\
c_{k,2} &= 1+(k-3)\gamma_0 + \frac12(k-1)(k-4) \gamma_0^2 + 2(k-3)\gamma_1
\end{align*}
Here $\gamma_0$ is the Euler–Mascheroni constant and $\gamma_1$ is the first Steiltjes constant.
\end{remark}

\begin{proof}
By the method of Perron, we have
\begin{equation*}
    A_k(x) = \frac{1}{2 \pi i} \int_{3/2-i R}^{3/2+i R} \left(\frac{\zeta '}{\zeta} (s) \right)' \left(-\frac{\zeta '}{\zeta} (s) \right)^{k-1} \zeta^2 (s) \frac{x^s}{s} \ ds + O\left(\frac{x^{3/2+\epsilon}}{R}\right).
\end{equation*}

We now calculate $A_k(x)$ by shifting past the pole at $s=1$ to the line $\Re(s)=1/2+\epsilon$, and evaluating the residue there.

Note that the integrand has a pole of order $k+3$ at $s=1$. Therefore, up to a power saving error term, $A_k(x)$ will equal the residue of the integrand around that point, which means
\[
A_k(x) = x \sum_{j=0}^{k+2} b_{k,j} (\log x)^{k+2-j} + O\left(R^{\epsilon} x^{1/2+\epsilon}\right) + O\left(\frac{x^{3/2+\epsilon}}{R}\right)
\]
for some constants $b_{k,j}$. (Note we have chosen to start the sum at the largest power of $\log x$, as this simplifies some later calculations). Taking $R=x$ optimises the error terms to be $O(x^{1/2+\epsilon})$.

To evaluate the residues, we employ the various Laurent expansions about $s=1$ in the various terms in the integrand. We give the ones we need for ease of reference. First, define the Stieltjes gamma constants to come from the series expansion of zeta around $s=1$, namely
\[
\zeta(s) = \frac{1}{s-1} + \gamma_0 - \gamma_1 (s-1) + \frac{1}{2!} \gamma_2 (s-1)^2 + \dots + (-1)^j \frac{1}{j!} \gamma_j (s-1)^j + \dots
\]
Then the expansions we need are
\begin{align*}
    &\left( \frac{\zeta'}{\zeta}(s)\right)' &=\quad& \frac{1}{(s-1)^2}+\left(-2 \gamma_1-\gamma_0^2\right)+ \left(6 \gamma_0  \gamma_1+3 \gamma_2+2 \gamma_0^3\right) (s-1) + \dots \\
   & \left(- \frac{\zeta'}{\zeta}(s)\right)^{k-1} &=\quad& \frac{1}{(s-1)^{k-1}} +\frac{(1-k) \gamma_0}{(s-1)^{k-2}} + \frac{\frac12k(k-1)\gamma_0^2 + 2(k-1)\gamma_1}{(s-1)^{k-3}} + \dots\\
  &  \zeta^2 (s) &=\quad& \frac{1}{(s-1)^2}+\frac{2 \gamma_0 }{s-1}+\left(\gamma_0^2-2 \gamma_1\right) + \dots\\
&    \frac{1}{s}&=\quad& 1 - (s-1) + (s-1)^2 + ...
\end{align*}
Combining these Laurent expansions allows us to write
\[
\left(\frac{\zeta '}{\zeta} (s) \right)' \left(-\frac{\zeta '}{\zeta} (s) \right)^{k-1} \zeta^2 (s) \frac{1}{s} = \sum_{\ell=0}^\infty c_{k,\ell} (s-1)^{-k-3+\ell}
\]
where we have explicitly given the first few values in the remark following the statement of the lemma.

The residue is the coefficient of $1/(s-1)$ which will come from combining these terms with the $x^s$ in the integrand (which is the only term that contains an $x$). Its expansion about $s-1$ is
\[
x^s = x \left( 1 + (s-1) \log x + \frac{(s-1)^2}{2!} (\log x)^2 + ... + \frac{(s-1)^k}{k!} (\log x)^k +...\right)
\]
so the coefficient of $(\log x)^{k+2-j}$ in the $(s-1)^{-1}$ term equals
\[
b_{k,j} = \frac{c_{k,j}}{(k+2-j)!}
\]
and such a combination is possible for $j$ between $0$ and $k+2$.
\end{proof}

Recall by Lemma~\ref{lem:JkStationary}
\begin{align*}
    J_k &= 2^k \sum_{2 \leq nm \leq T/2 \pi}  \frac{a_k(n)}{(\log nm)^k} d(m) +O\left(T^{1/2+\varepsilon}\right) \\
    &= 2^k \sum_{2 \leq n\leq T/2\pi} \frac{(a_k \ast d)(n)}{(\log n)^k} +O\left(T^{1/2+\varepsilon}\right)
\end{align*}
so we can use partial summation together with with Lemma \ref{lem:Ak} to evaluate $J_k$.

\begin{lemma}\label{lem:JkAsympExpansion}
    Writing $L = \log \frac{T}{2\pi}$, for any $N \geq 1$ we have
    \begin{equation*}
    J_k = \frac{T}{2\pi}  \left( \frac{2^k}{(k+2)!} L^2 + \beta_{k,-1} L +
     \beta_{k,0}  + \sum_{n=1}^N \beta_{k,n} L^{-n} \right)
   + O\left(\frac{T}{L^{N+1}}\right)
    \end{equation*}
where
\[
\beta_{k,-1} = \frac{2^k}{(k+2)!}\left( -2-(k+2)(k-3) \gamma_0 \right),
\]
and
\[
\beta_{k,0} = \frac{2^k \left( (k+2)(k+1) c_{k,2} - k c_{k,0}  + k (k+2) c_{k,1}  \right) }{(k+2)!}
\]
and for $1 \leq n\leq k$
\begin{equation*}
\beta_{k,n} = \frac{2^k c_{k,n+2}}{(k-n)!} + \frac{2^k (n-1)!}{(k-1)!} \sum_{j=0}^{n-1}  \binom{k}{j} c_{k,j+2}
\end{equation*}
and for $n \geq k+1$,
\begin{equation*}
\beta_{k,n} = \frac{2^k (n-1)!}{(k-1)!} \sum_{j=0}^{k} \binom{k}{j} c_{k,j+2}
\end{equation*}
where the $c_{k,\ell}$ are given in Lemma~\ref{lem:Ak}
\end{lemma}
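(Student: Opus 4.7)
The approach is Abel summation applied to the sum representation of $J_k$ stated just before the lemma, combined with the asymptotic for $A_k(x)$ from Lemma~\ref{lem:Ak} and repeated integration by parts. Writing $x=T/2\pi$ and $L=\log x$, Abel summation with weight $f(t)=(\log t)^{-k}$ gives
\[
\sum_{2\leq n\leq x}\frac{(a_k\ast d)(n)}{(\log n)^k} = \frac{A_k(x)}{L^k} + k\int_{2}^{x}\frac{A_k(t)}{t(\log t)^{k+1}}\,dt + O(1).
\]
Substituting $A_k(t)=t\sum_{j=0}^{k+2}\frac{c_{k,j}}{(k+2-j)!}(\log t)^{k+2-j} + O(t^{1/2+\varepsilon})$, the error term contributes $O(T^{1/2+\varepsilon})$ both at the boundary and in the integral (since $\int_2^x t^{-1/2+\varepsilon}(\log t)^{-k-1}\,dt \ll x^{1/2+\varepsilon}$), which is absorbed into the claimed $O(T/L^{N+1})$.

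The boundary term $A_k(x)/L^k$ equals $\frac{T}{2\pi}\sum_{j=0}^{k+2}\frac{c_{k,j}}{(k+2-j)!}L^{2-j}$. Multiplied by $2^k$, this already produces the leading $\frac{2^k}{(k+2)!}L^2$ (using $c_{k,0}=1$) and contributes $\frac{2^k c_{k,n+2}}{(k-n)!}L^{-n}$ for $0\leq n\leq k$. For the integral, cancelling the factor of $t$ in the denominator with that from $A_k(t)$, the task reduces to evaluating $\int_2^x(\log t)^{1-j}\,dt$ for $0\leq j\leq k+2$. The cases $j=0,1$ are elementary, giving contributions of size $xL$ and $x$ respectively. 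For $j\geq 2$, iterated integration by parts yields
\[
\int_{2}^{x}(\log t)^{1-j}\,dt = x\sum_{m=0}^{M-1}\frac{(j-1)_m}{L^{\,j-1+m}} + O\!\left(\frac{x}{L^{\,j-1+M}}\right),
\]
where $(j-1)_m=(j+m-2)!/(j-2)!$ is the Pochhammer symbol. Choosing $M=M(N,k)$ sufficiently large makes every remainder uniformly dominated by $T/L^{N+1}$.

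To finish, I collect all contributions at each fixed power $L^{-n}$. The integral part contributes exactly when $j-1+m=n$, i.e.\ $m=n+1-j$ with $2\leq j\leq \min\{k+2,n+1\}$, and the coefficient simplifies as
\[
k(n-1)!\sum_{j=2}^{\min\{k+2,n+1\}}\frac{c_{k,j}}{(k+2-j)!\,(j-2)!} \;=\; \frac{(n-1)!}{(k-1)!}\sum_{j=0}^{\min\{k,n-1\}}\binom{k}{j}c_{k,j+2}
\]
after the reindexing $j\mapsto j+2$ and the identity $\binom{k}{j}/k!=1/(j!(k-j)!)$. Multiplying by $2^k$ and adding the boundary term $\frac{2^k c_{k,n+2}}{(k-n)!}$ (present only for $n\leq k$) yields the stated $\beta_{k,n}$ in both ranges $1\leq n\leq k$ and $n\geq k+1$. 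The formulas for $\beta_{k,-1}$ and $\beta_{k,0}$ drop out by combining the boundary $j=1,2$ terms with the integral contributions from $j=0,1$ (which carry the factors $L$ and $1$, respectively) and invoking $c_{k,0}=1$ together with the explicit form of $c_{k,1}$.

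The main obstacle is purely combinatorial bookkeeping: verifying that the Pochhammer-based expansion produced by the iterated IBP agrees with the binomial-coefficient form stated in the lemma, and ensuring the truncation parameter $M$ can be chosen uniformly in $j$ so that all remainders, together with the $O(T^{1/2+\varepsilon})$ error from $A_k$, collapse into the single claimed $O(T/L^{N+1})$ error.
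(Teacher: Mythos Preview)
Your proposal is correct and follows essentially the same route as the paper: Abel summation with weight $(\log t)^{-k}$ applied to the representation of $J_k$ from Lemma~\ref{lem:JkStationary}, substitution of the $A_k$ asymptotic from Lemma~\ref{lem:Ak}, exact treatment of the $j=0,1$ integrals, iterated integration by parts for $j\ge 2$, and then the same reindexing $j\mapsto j+2$ to pass from the factorial form to the $\binom{k}{j}$ form. The only differences are cosmetic---your use of Pochhammer notation for the IBP expansion versus the paper's recursive formulation---so there is nothing to add.
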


\begin{proof}
Set $f(x) = 1/(\log x)^k$, so $f'(x) = -k/(x (\log x)^{k+1})$, and by partial summation we have
\begin{align*}
    J_k &= 2^k A_k\left(\frac{T}{2\pi}\right) f\left(\frac{T}{2\pi}\right) - 2^k \int_2^{T/2\pi} A_k(x) f'(x) \ dx + O\left(T^{1/2+\epsilon}\right) \\
    &= 2^k \frac{T}{2\pi}\sum_{j=0}^{k+2} \frac{c_{k,j}}{(k+2-j)!} L^{2-j} + 2^k k \sum_{j=0}^{k+2} \frac{c_{k,j}}{(k+2-j)!} \int_2^{T/2\pi}  (\log x)^{1-j} dx + O\left(T^{1/2+\epsilon}\right)
\end{align*}
where $L = \log \frac{T}{2\pi}$, and where
\[
A_k(x) = \sum_{n\leq x} (a_k \ast d)(n) ,
\]
which was evaluated in Lemma~\ref{lem:Ak}.

Note the dominant term (that is, $L^2$) comes from the first sum only, whereas subleading and later terms also have contributions from the second sum. For $j=0$ and $j=1$ in the second sum, one can perform the integrals exactly, obtaining
\begin{multline}\label{eq:JkSumCk}
J_k = \frac{T}{2\pi} \left(\frac{2^k c_{k,0}}{(k+2)!} L^2 + \left[ \frac{2^k c_{k,1}}{(k+1)!} +\frac{k 2^k c_{k,0}}{(k+2)!} \right] L + \left[ \frac{2^k c_{k,2}}{k!} - \frac{k 2^k c_{k,0}}{(k+2)!} + \frac{k 2^k c_{k,1}}{(k+1)!} \right] \right) \\
+  \frac{T}{2\pi} \sum_{j=1}^{k} \frac{2^k c_{k,j+2}}{(k-j)!} \frac{1}{L^{j}} + \sum_{j=1}^{k+1} \frac{k 2^k c_{k,j+1}}{(k+1-j)!} \int_2^{T/2\pi} \frac{1}{(\log x)^{j}} dx +O\left(T^{1/2+\epsilon}\right)
\end{multline}
where we have relabeled the sums on the second line to start from $j=1$ now. Note that the remaining integrals are all negative powers of $\log x$, so evaluate to an infinite chain of decreasing powers of $L$, for example for $1 \leq j \leq N$, we have
\begin{align*}
    \int_{2}^{T/2\pi} \frac{1}{(\log x)^j} \ dx & = \frac{T}{2\pi L^j} + j \int_{2}^{T/2\pi} \frac{1}{(\log x)^{j+1}} \ dx + O(1)\\
    &=  \frac{T}{2\pi} \sum_{\ell=j}^{N} \frac{(\ell-1)!}{(j-1)!}\frac{1}{L^{\ell}}
   + O\left( \frac{T}{L^{N+1}} \right).
\end{align*}

Finally, $\beta_{k,n}$ collects all the coefficients of $L^{-n}$. For example
\begin{align*}
    \beta_{k,-1} &= \frac{2^k c_{k,1}}{(k+1)!} +\frac{k 2^k c_{k,0}}{(k+2)!}  \\
    &= - \frac{2^k}{(k+2)!} \left(2+ (k+2)(k-3)\gamma_0)  \right)
\end{align*}
where we use the remark following the statement of Lemma~\ref{lem:Ak}, to evaluate $c_{k,0}$ and $c_{k,1}$.

If $n\leq k$ then there will be a contribution from the second line of \eqref{eq:JkSumCk} coming from the first sum (with $j=n$) and from all the integrals in the second sum with $j$ between $1$ and $n$. We have (for $n\leq k$)
\begin{align*}
\beta_{k,n} &= \frac{2^k c_{k,n+2}}{(k-n)!}  + \sum_{j=1}^{n} \frac{k 2^k c_{k,j+1}}{(k+1-j)!} \frac{(n-1)!}{(j-1)!} \\
&= \frac{2^k c_{k,n+2}}{(k-n)!}  + \sum_{j=1}^{n} \binom{k}{j-1} \frac{2^k (n-1)! c_{k,j+1}}{(k-1)!}
\end{align*}
whereas if $n \geq k+1$ then there will only be contributions coming from the second sum of the second line of \eqref{eq:JkSumCk}, but every term in that sum contributes. We find
\begin{align*}
\beta_{k,n} &= \sum_{j=1}^{k+1} \frac{k 2^k c_{k,j+1}}{(k+1-j)!} \frac{(n-1)!}{(j-1)!} \\
&=\sum_{j=1}^{k+1} \binom{k}{j-1} \frac{2^k (n-1)! c_{k,j+1}}{(k-1)!}
\end{align*}
These both give the same as in the statement of the lemma, after a trivial relabelling of the index $j$.
\end{proof}
%%%%%%

Now we have evaluated $J_k$, in order to complete the proof of the Theorem, we can see from \eqref{eq:ZetaSumInTermsOfJk} that all we must do is sum the contribution from $2 J_k$ over all $k \geq 1$.

We have
\begin{align}\label{eq:asympWithError}
\sum_{0< \gamma \leq T} \max_{\gamma \leq t \leq \gamma^+} \left| \zeta \left( \frac{1}{2} + it \right) \right|^2 &= 2\Re \sum_{k=1}^\infty J_k + O\left( T^{1/2 + \varepsilon}\right) \\
&= \frac{T}{2\pi} \left(\alpha_{-2} L^2 + \alpha_{-1} L + \alpha_0 + \sum_{n=1}^N \frac{\alpha_n }{L^N} \right)+ O_N\left(\frac{T}{L^{N+1}} \right) \notag
\end{align}
where
\[
    \alpha_{-2} = \sum_{k=1}^\infty \frac{2^{k+1}}{(k+2)!} = \frac{e^2-5}{2}
\]
\begin{align*}
    \alpha_{-1} &= \sum_{k=1}^\infty \frac{2^k}{(k+2)!}\left( -2-(k+2)(k-3) \gamma_0 \right)  \\
    &= 5-e^2-10 \gamma_0 +2 e^2 \gamma_0
\end{align*}
and
\begin{align*}
\alpha_{0} &= \sum_{k=1}^\infty \frac{2^{k+1} \left( (k+2)(k+1) c_{k,2} - k c_{k,0}  + k (k+2) c_{k,1}  \right) }{(k+2)!} \\
&=  12 \gamma_1-4 e^2 \gamma_1-5+e^2+10 \gamma_0 -2 e^2 \gamma_0 -4\gamma_0 ^2
\end{align*}
after substituting the values for $c_{k,\ell}$ found in the remark following the statement of Lemma~\ref{lem:Ak}. Note that the $\alpha_{-2}$ coefficient recovers the result of Conrey and Ghosh \eqref{CGLead}. Also,
\[
\alpha_n = \sum_{k=1}^\infty 2\beta_{k,n}
\]
with the $\beta_{k,n}$ given in the previous lemma. This completes the proof of the Theorem.

\section{Graphical Data}\label{Graphs}

Using a simple bisection method, for the first million zeros using Mathematica we found the location of the local maximum of $Z(t)^2$, accurate to to within $10^{-4}$ times the gap of the adjacent zeros. We also found the value of $Z(t_n)^2$ at that point.

Figure~\ref{fig:truth} plots the cumulative total of $Z(t_n)^2$ in blue over the first million values of $t_n$. This takes us to height $T=600,269.97$. In green is the leading order asymptotic of this, $\frac{e^2-5}{4\pi} T \left(\log\frac{T}{2\pi} \right)^2$ first given by Conrey and Ghosh.

    \begin{figure}[ht]
        \centering
        \includegraphics{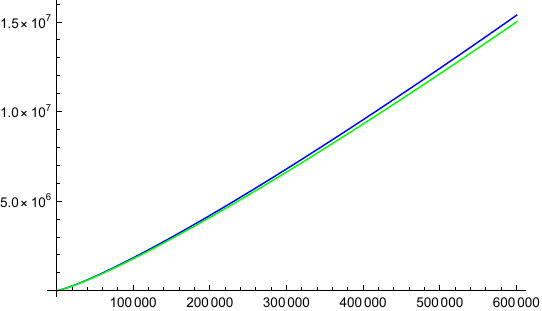}
        \caption{The blue curve shows the true value of $\sum_{0< \gamma \leq T} \max_{\gamma \leq t \leq \gamma^+} \left| \zeta \left( \frac{1}{2} + it \right) \right|^2$. The green curve shows the leading order asymptotic $ \frac{e^2 - 5}{4 \pi} T (\log \frac{T}{2\pi})^2$. The range of the graph is over the first million zeros of zeta.}\label{fig:truth}
    \end{figure}

\begin{figure}
\begin{subfigure}{.5\textwidth}
  \centering
  \includegraphics[width=.8\linewidth]{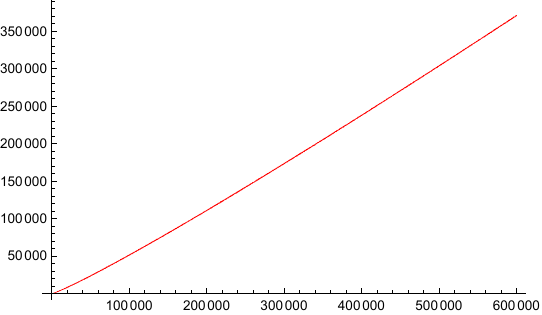}
  \caption{Leading order}
  \label{fig:sfig1}
\end{subfigure}%
\begin{subfigure}{.5\textwidth}
  \centering
  \includegraphics[width=.8\linewidth]{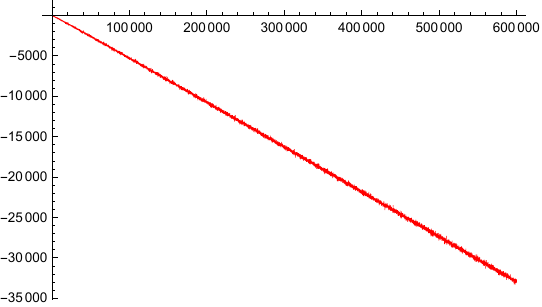}
  \caption{Subleading order}
  \label{fig:sfig2}
\end{subfigure}\\
\begin{subfigure}{.5\textwidth}
  \centering
  \includegraphics[width=.8\linewidth]{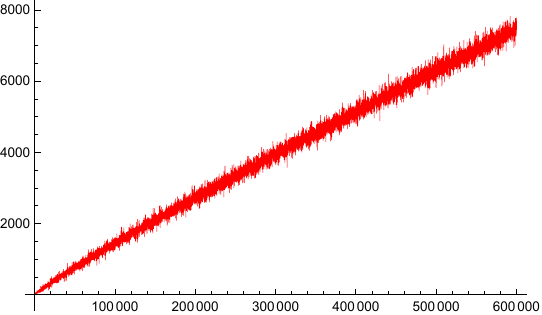}
  \caption{$N=0$}
  \label{fig:sfig3}
\end{subfigure}%
\begin{subfigure}{.5\textwidth}
  \centering
  \includegraphics[width=.8\linewidth]{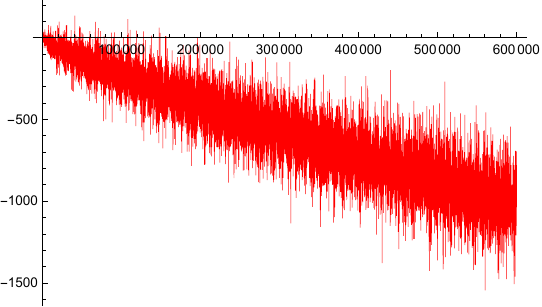}
  \caption{$N=1$}
  \label{fig:sfig4}
\end{subfigure}\\
\begin{subfigure}{.5\textwidth}
  \centering
  \includegraphics[width=.8\linewidth]{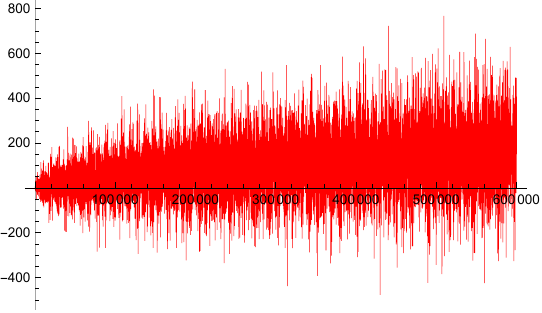}
  \caption{$N=2$}
  \label{fig:sfig5}
\end{subfigure}%
\begin{subfigure}{.5\textwidth}
  \centering
  \includegraphics[width=.8\linewidth]{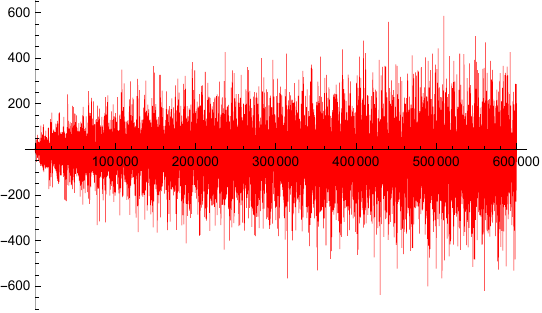}
  \caption{$N=3$}
  \label{fig:sfig6}
\end{subfigure}\\
\begin{subfigure}{.5\textwidth}
  \centering
  \includegraphics[width=.8\linewidth]{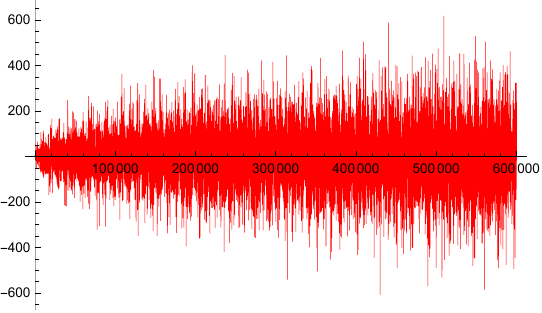}
  \caption{$N=4$}
  \label{fig:sfig7}
\end{subfigure}%
\begin{subfigure}{.5\textwidth}
  \centering
  \includegraphics[width=.8\linewidth]{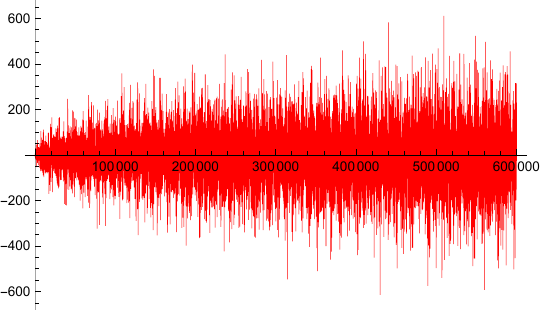}
  \caption{$N=5$}
  \label{fig:sfig8}
\end{subfigure}\\
\caption{The error between the true value and the asymptotic value
given in Theorem~\ref{thm:LowerOrderTerms} for various $N$, plotted over the first million zeros of zeta}
\label{fig:error}
\end{figure}

Figure~\ref{fig:error} shows the error between the true value of
\[
\sum_{0< \gamma \leq T} \max_{\gamma \leq t \leq \gamma^+} \left| \zeta \left( \frac{1}{2} + it \right) \right|^2
\]
and the asymptotic form given in Theorem~\ref{thm:LowerOrderTerms}, in the cases when $-2 \leq N \leq 5$, that is
\[
\frac{e^2-5}{2} \frac{t}{2\pi} \left(\log \frac{t}{2\pi}\right)^2 + \frac{t}{2 \pi} \sum_{n=-1}^{N} \alpha_n \left(\log \frac{t}{2\pi}\right)^{-n}
\]
with the values of $\alpha_n$ for $n=-1,0,1,2,3,4,5$ given in the Theorem. The last three graphs show that $N=3,4,5$ all yield approximately the same size of error term.

We can see the effect of increasing $N$ on the goodness of the asymptotic expansion. Fixing $T$ at the height of the millionth local maximum (that is, $t \approx 600269.96$, the true value of the cumulative sum is $1.53778\times 10^7$, and Table~\ref{tab:ErrorForVariousN} shows the absolute error for varying $N$ at that point.
\begin{center}
\begin{table}
\begin{tabular}{|l|r@{.}l|}
\hline
\multicolumn{1}{|c|}{$N$} & \multicolumn{2}{|c|}{Error} \\
\hline
$-2$ (the leading order) & $371166$&05\\
$-1$ & $-33026$&28\\
0 & $7412$&69\\
1 & $-1072$&47\\
2 & $113$&86\\
3 & $-91$&45\\
4 & $-54$&63\\
5 & $-62$&32\\
6 & $-60$&88\\
7 & $-61$&27\\
8 & $-61$&23\\
9 & $-61$&27\\
10 & $-61$&29\\
    \hline
    \end{tabular}

    \caption{Table showing the error in the asymptotic approximation for various $N$ at the millionth local maximum point}
    \label{tab:ErrorForVariousN}
    \end{table}
\end{center}

Theorem~\ref{thm:LowerOrderTerms} is stated for a fixed stopping point $N$. It is well known that a typical asymptotic expansion does not converge as an infinite series (hence why they should be treated as finite series), but one can nonetheless allow the truncation to grow with $T$. In this case we have carried through an additional error of size $O\left(T^{1/2+\epsilon}\right)$. Our final calculation is to perform a very rough estimate of the optimal truncation, that is roughly where one expects the error from the asymptotic expansion to equal $O\left(T^{1/2+\epsilon}\right)$.

In Theorem~\ref{thm:LowerOrderTerms} we found the coefficient of $(\log\frac{T}{2\pi})^{-n}$ equals
\[
\alpha_n = \sum_{k=n}^\infty \frac{2^{k+1}  c_{k,n+2} }{(k-n)!} +  (n-1)! \sum_{k=1}^\infty \frac{2^{k+1}}{(k-1)!} \sum_{j=0}^{\min\{k,n-1\}}  \binom{k}{j} c_{k,j+2}.
\]

We will show that these coefficients grow when $n$ gets large, and indeed grow faster than $(\log\frac{T}{2\pi})^{n}$, so their ratio eventually grows with $n$. Some numerical values of these coefficients are given in Table~\ref{tab:alphan}.

\begin{center}
\begin{table}
\begin{tabular}{|c|r@{.}l||c|r@{.}l||c|r@{.}l|}
\hline
$n$ & \multicolumn{2}{|c||}{$\alpha_n$} & $n$ & \multicolumn{2}{|c||}{$\alpha_n$} & $n$ & \multicolumn{2}{|c|}{$\alpha_n$} \\
\hline
1 & $1$&$02$ & 7 & $106$&$90$ & 13 & $1$&$04\times 10^7$\\
2 & $-1$&$63$ & 8 & $-124$&$52$ & 14 & $1$&$35\times 10^8$\\
3 & $3$&$24$ & 9 & $1485$&$29$ & 15 & $1$&$89\times 10^9$\\
4 & $-6$&$66$ & 10 & $6209$&$69$ & 16 & $2$&$83\times 10^{10}$\\
5 & $15$&$95$ & 11 & $83003$&$56$ & 17 & $4$&$53\times 10^{11}$\\
6 & $-34$&$23$ & 12 & $851308$&$97$ & 18 & $7$&$70\times 10^{12}$\\
    \hline
    \end{tabular}
    \caption{Table showing the numerical size of the values of $\alpha_n$}
    \label{tab:alphan}
    \end{table}
\end{center}

Crudely estimating the size of $c_{k,\ell}$ for large $k$ and $\ell$, we find
\begin{align*}
\left|\sum_{k=1}^\infty \frac{2^{k+1}}{(k-1)!} \sum_{j=0}^{\min\{k,n-1\}}  \binom{k}{j} c_{k,j+2}\right| &\ll \sum_{k=1}^\infty \frac{2^{k+1}}{(k-1)!} \sum_{j=0}^{k}  \binom{k}{j} \left|c_{k,j+2}\right| \\
&\ll 1
\end{align*}
as $n\to\infty$ and the rate of growth of
\[
\sum_{k=n}^\infty \frac{2^{k+1}  c_{k,n+2} }{(k-n)!}
\]
is bounded by $C^n$ for some $C>1$, so considerably smaller than $(n-1)!$.

Therefore $|\alpha_n| \ll (n-1)!$ for large $n$, a result entirely consistent with asymptotic series looking like ``factorial over power''.

So for large $T$, one expects to see $\alpha_n (\log\frac{t}{2\pi})^{-n}$ initially decrease before increasing without bound. Normally, one finds an ``optimal truncation point'' but for our purposes equation \eqref{eq:asympWithError} shows we simply need to find the smallest $N$ such that 
\[
\frac{\alpha_{N}}{\left(\log \frac{T}{2\pi} \right)^N} \ll T^{-1/2 + \epsilon}.
\]
Replacing $\alpha_N$ with $N!$ and taking logarithms and using the crudest form of Stirling's approximation, this will be the smallest $N$ such that
\[
N \log N -N - N \log\log T \ll (-1/2+\epsilon) \log T
\]
for large $T$, and this has solution
\[
N = \eta  \log T
\]
with $\eta$ satisfying $\eta \log \eta - \eta = -1/2$ which has a solution $\eta = 0.186\dots$ (with a larger, irrelevant, solution at $\eta = 2.155\dots$).

That is, if we set $N \approx 0.2 \log T$, then we expect
\[
   \sum_{0< \gamma \leq T} \max_{\gamma \leq t \leq \gamma^+} \left| \zeta \left( \frac{1}{2} + it \right) \right|^2   =  \frac{e^2-5}{2} \frac{T}{2\pi} L^2 + \frac{T}{2 \pi} \sum_{n=-1}^{N} \frac{\alpha_n}{L^n} + O\left(T^{1/2+\epsilon}\right).
\]

\bibliographystyle{abbrv}

\bibliography{bibliography}	

\begin{thebibliography}{10}

\bibitem{AGKW24}
T.~Assiotis, M.~A. Gunes, J.~P. Keating, and F.~Wei.
\newblock {Exchangeable arrays and integrable systems for characteristic
  polynomials of random matrices}.
\newblock Preprint (arXiv:2407.19233), 2024.

\bibitem{AKW22}
T.~{A}ssiotis, J.~P. {K}eating, and J.~{W}arren.
\newblock On the joint moments of the characteristic polynomials of random
  unitary matrices.
\newblock {\em International {M}athematics {R}esearch {N}otices},
  18:14564--14603, 2022.

\bibitem{Bailey19}
E.~C. Bailey, S.~Bettin, G.~Blower, J.~B. Conrey, A.~Prokhorov, M.~O.
  Rubinstein, and N.~C. Snaith.
\newblock Mixed moments of characteristic polynomials of random unitary
  matrices.
\newblock {\em J. Math. Phys.}, 60, 2019.

\bibitem{Basor19}
E.~{B}asor, P.~{B}leher, R.~{B}uckingham, T.~{G}rava, A.~{I}ts, E.~{I}ts, and
  J.~P. {K}eating.
\newblock A representation of joint moments of {CUE} characteristic polynomials
  in terms of painlev\'{e} functions.
\newblock {\em Nonlinearity.}, 32:4033--4078, 2019.

\bibitem{ConRubSna06}
J.~Conrey, M.~Rubinstein, and N.~Snaith.
\newblock Moments of the derivative of characteristic polynomials with an
  application to the {R}iemann {Z}eta function.
\newblock {\em Comm. Math. Phys.}, 267:611--629, 2006.

\bibitem{Con88}
J.~B. Conrey.
\newblock The fourth moment of derivatives of the {R}iemann zeta-function.
\newblock {\em The {Q}uarterly {J}ournal of {M}athematics}, 39.1:21--36, 1988.

\bibitem{ConGho85}
J.~B. Conrey and A.~Ghosh.
\newblock {A} mean value theorem for the {R}iemann zeta-function at its
  relative extrema on the critical line.
\newblock {\em J. Lond. Math. Soc. (2)}, 32:193--202, 1985.

\bibitem{ConGho89}
J.~B. Conrey and A.~Ghosh.
\newblock {O}n mean values of the zeta-function, {II}.
\newblock {\em Acta Arith.}, 52:367--371, 1989.

\bibitem{Cur21}
M.~J. Curran.
\newblock Upper bounds for fractional joint moments of the {R}iemann zeta
  function.
\newblock Preprint (arXiv:2106.00165), 2021.

\bibitem{CurHey24}
M.~J. Curran and A.~Heycock.
\newblock Sharp bounds for joint moments of the {R}iemann zeta function.
\newblock Preprint (arXiv:2403.00902), 2024.

\bibitem{Deh08}
P.-O. Dehaye.
\newblock Joint moments of derivatives of characteristic polynomials.
\newblock {\em Algebra {N}umber {T}heory}, 2:31--68, 2008.

\bibitem{Deh10}
P.-O. Dehaye.
\newblock A note on moments of derivatives of characteristic polynomials.
\newblock {\em 22nd International Conference on Formal Power Series and
  Algebraic Combinatorics (FPSAC 2010), 681-692, Discrete Math. Theor. Comput.
  Sci. Proc. AN, Assoc. Discrete Math. Theor. Comput. Sci.}, pages 681--692,
  2010.

\bibitem{ForWit06}
P.~J. Forrester and N.~S. Witte.
\newblock Boundary conditions associated with the painlev\'{e} iii and v
  evaluations of some random matrix averages.
\newblock {\em J. Phys.}, 28:8983--8995, 2006.

\bibitem{Gonek84}
S.~M. Gonek.
\newblock {M}ean values of the {R}iemann zeta function and its derivatives.
\newblock {\em Invent. Math.}, 75:123--141, 1984.

\bibitem{Hall99}
R.~R. Hall.
\newblock {T}he behaviour of the {R}iemann zeta function on the critical line.
\newblock {\em Mathematika}, 46:281--314, 1999.

\bibitem{Hall03}
R.~R. Hall.
\newblock On the extreme values of the {R}iemann zeta-function between its
  zeros on the critical line.
\newblock {\em {J}. {R}eine {A}ngew. {M}ath.}, 560:29--41, 2003.

\bibitem{Hall06}
R.~R. Hall.
\newblock Extreme values of the {R}iemann zeta-function on short zero
  intervals.
\newblock {\em Acta {A}rith.}, 121:259--273, 2006.

\bibitem{Hughes01}
C.~P. Hughes.
\newblock {\em On the characteristic polynomial of a random unitary matrix and
  the {R}iemann zeta function}.
\newblock PhD thesis, University of Bristol, 2001.

\bibitem{Ing26}
A.~E. Ingham.
\newblock Mean-value theorems in the theory of the riemann zeta-function.
\newblock {\em Proceedings of the London Mathematical Society}, 2:273–--300,
  1926.

\bibitem{KS00}
J.~P. Keating and N.~C. Snaith.
\newblock Random matrix theory and {L}-functions at s= 1/2.
\newblock {\em Communications in Mathematical Physics}, 214:91--100, 2000.

\bibitem{KS00a}
J.~P. Keating and N.~C. Snaith.
\newblock {R}andom matrix theory and $\zeta(1/2+it)$.
\newblock {\em Comm. Math. Phys.}, 214:57--89, 2000.

\bibitem{KeatWei231}
J.~P. Keating and F.~Wei.
\newblock Joint moments of higher order derivatives of {CUE} characteristic
  polynomials {I}: asymptotic formulae.
\newblock Preprint (arXiv:2307.01625), 2023.

\bibitem{KeatWei232}
J.~P. Keating and F.~Wei.
\newblock Joint moments of higher order derivatives of {CUE} characteristic
  polynomials {II}: {S}tructures, recursive relations, and applications.
\newblock Preprint (arXiv:2307.02831), 2023.

\bibitem{Lit24}
J.~E. Littlewood.
\newblock On the {R}iemann zeta-function.
\newblock {\em Proc. London Math. Soc.}, 24:175--201, 1925.

\bibitem{Milinovich11}
M.~B. Milinovich.
\newblock Moments of the {R}iemann zeta‐function at its relative extrema on
  the critical line.
\newblock {\em Bulletin of the {L}ondon {M}athematical {S}ociety},
  43.6:1119--1129, 2011.

\bibitem{Winn}
B.~Winn.
\newblock {D}erivative moments for characteristic polynomials from the {CUE}.
\newblock {\em Commun. Math. Phys.}, 315:531--563, 2012.

\bibitem{KaraYil11}
{Y.~Karabulut, and C.~Y{\i}ld{\i}r{\i}m}.
\newblock On some averages at the zeros of the derivatives of the {R}iemann
  zeta-function.
\newblock {\em Journal of Number Theory}, 131:1939--1961, 2011.

\end{thebibliography}

\end{document}